\newcommand{\scr}{\scriptstyle}
\newcommand{\bK}{\mathbb{K}}
\newtheorem{theorem}{Theorem}[section]
\newtheorem{lemma}[theorem]{Lemma}
\newtheorem{corollary}[theorem]{Corollary}
\newtheorem{proposition}[theorem]{Proposition}
\theoremstyle{definition}
\newtheorem{definition}[theorem]{Definition}
\newtheorem{example}[theorem]{Example}
\newtheorem{remark}[theorem]{Remark}
\newlength{\cellsize}
\newcommand\tableau[1]{
\vcenter{
\let\\=\cr
\baselineskip=-16000pt
\lineskiplimit=16000pt
\lineskip=0pt
\halign{&\tableaucell{##}\cr#1\crcr}}}
\newcommand{\tableaucell}[1]{{%
\def \arg{#1}\def \void{}%
\ifx \void \arg
\vbox to \cellsize{\vfil \hrule width \cellsize height 0pt}%
\else
\unitlength=\cellsize
\begin{picture}(1,1)
\put(0,0){\makebox(1,1)[c]{$#1$}}
\put(0,0){\line(1,0){1}}
\put(0,1){\line(1,0){1}}
\put(0,0){\line(0,1){1}}
\put(1,0){\line(0,1){1}}
\end{picture}%
\fi}}
\DeclareMathOperator{\comp}{comp}
\DeclareMathOperator{\set}{set}
\newcommand{\rdI}{\mathcal{R}\mathfrak{S}^*}
\DeclareMathOperator{\rw}{rw}
\DeclareMathOperator{\inv}{inv}
\newcommand{\SIT}{\ensuremath{\operatorname{SIT}}}
\newcommand{\SET}{\ensuremath{\operatorname{SET}}} 
\newcommand{\hn}{H_n(0)}
\newcommand{\ninv}{\mathrm{inv}} 
\newcommand{\row}{\mathrm{row}} 
\newcommand{\col}{\mathrm{col}} 
\newcommand{\suchthat}{\;|\;}
\newcommand{\spam}{\operatorname{span}}
\DeclareFontFamily{U}{rcjhbltx}{}  
\DeclareFontShape{U}{rcjhbltx}{m}{n}{<->rcjhbltx}{}
\DeclareSymbolFont{hebrewletters}{U}{rcjhbltx}{m}{n}
\DeclareMathSymbol{\shin}{\mathord}{hebrewletters}{152}
\author[Lafreni\`ere, Orellana,  Pun, Sundaram, van Willigenburg, Whitehead McGinley]{Nadia Lafreni\`ere,  Rosa Orellana, Anna Pun, Sheila Sundaram, Stephanie van Willigenburg and Tamsen Whitehead McGinley}
\address{Nadia Lafreni\`ere: Department of Mathematics and Statistics, Concordia University, Montr\'{e}al, QC, Canada} 
\email{nadia.lafreniere@concordia.ca}
\address{Rosa Orellana: Department of Mathematics, Dartmouth College, Hanover, NH, USA}
\email{rosa.c.orellana@dartmouth.edu}
\address{Anna Pun: Department of Mathematics, Baruch College, New York, NY, USA}
\email{anna.pun@baruch.cuny.edu}
\address{Sheila Sundaram: School of Mathematics, University of Minnesota, Minneapolis, MN, USA}
\email{shsund@umn.edu}
\address{Stephanie van Willigenburg: Department of Mathematics, University of British Columbia, Vancouver, BC, Canada}
\email{steph@math.ubc.ca}
\address{Tamsen Whitehead {McGinley:} Department of Mathematics, Santa Clara University, Santa Clara, CA, USA}
\email{tmcginley@scu.edu}
\title[The skew extended Hecke poset]{Minimal elements in the  skew extended 0-Hecke poset}
\date{\today}
\begin{document}

\subjclass{05E05, 05E10, 06A07, 16T05, 20C08.}
\keywords{0-Hecke algebra,  immaculate Hecke poset, inversion, minimal element, row-strict dual immaculate action, standard  immaculate tableau, standard extended tableau, skew 0-Hecke  module, skew extended 0-Hecke poset.}

\begin{abstract}{The row-strict 0-Hecke action on standard immaculate skew  tableaux was studied by the present authors, who showed that it gives rise to a bounded poset, called the \emph{skew immaculate Hecke poset}, and consequently to a cyclic 0-Hecke module. It was further shown that the subposet of skew standard extended immaculate  tableaux always has a unique maximal element, but may have multiple minimal elements.  In this paper we focus on these minimal elements, completely classifying them for a family of skew shapes that we call \emph{lobsters}. Moreover, we prove that when the skew shape is connected, the skew extended Hecke poset does have a unique minimal element, thereby showing that the associated 0-Hecke module is cyclic for both the row-strict and the dual immaculate actions. }
\end{abstract}

\maketitle

\section{Introduction}


The papers \cite{NSvWVW2023, NSvWVW2024}  studied modules of the $0$-Hecke algebra arising from actions on several sets of standard tableaux of composition shape.  These results were extended to actions  on skew-shaped tableaux by the authors of this paper \cite{LOPSvWwM2025}. In \cite{LOPSvWwM2025,NSvWVW2023, NSvWVW2024}, modules of the $0$-Hecke algebra
were defined via actions on sets of tableaux. These actions give the sets of tableaux a poset structure whose combinatorics can be used to describe representation-theoretic properties of the $0$-Hecke modules such as indecomposability or cyclicity. In the present work, we continue our study of the combinatorics of the   skew 0-Hecke poset, focusing on the one  resulting from the \emph{row-strict immaculate action} on skew standard extended tableaux.


We have endeavored to make this paper completely self-contained, and accessible to a broad audience.  In particular, although the questions we address originate in the representation-theoretic aspects of the $0$-Hecke algebra, no prior knowledge of the latter is required. We now give a more detailed description of our results. Let $\alpha$ and $\beta$ be compositions with $\beta\subseteq \alpha$.
The primary combinatorial objects of interest in this paper are:


\begin{itemize}
\item 
  The \emph{skew composition} $\alpha/\beta$, whose diagram consists of those cells in the diagram of $\alpha$ that are not in the diagram of $\beta$ when placed in the south-west corner of $\alpha$.  The number of cells in  $\alpha/\beta$ is denoted by $|\alpha/\beta|$.
  \item   \emph{Standard extended tableaux} of skew shape $\alpha/\beta$; these are bijective fillings, with the integers $\{1,2,\ldots, |\alpha/\beta|\}$, of the diagram of the skew shape such that all columns increase strictly bottom to top, and all  rows increase strictly left to right. 
\end{itemize}
Detailed definitions, as well as background on the 0-Hecke algebra, appear in \Cref{sec:bgrnd}.  Here we illustrate the  combinatorial objects described above with one example.
Consider $\alpha=(3,2,5,4,1)$ and $\beta=(1,1,3,2)$; these define the skew shape $\alpha/\beta$ pictured below, for which one standard extended filling  is 
\ytableausetup{smalltableaux, boxsize=1.1em}
\begin{center}
\begin{ytableau}
6\\
&&7&9\\
&&&1&5\\
&3\\
&2&4
\end{ytableau}.
\end{center}
\ytableausetup{nosmalltableaux} 

Standard \emph{extended} tableaux form a subset of the standard immaculate tableaux introduced in \cite{BBSSZ2014}.  For standard immaculate tableaux,  the condition on the bijective fillings requires strict increase, left to right, for all rows,  but strict increase, bottom to top, only for the cells remaining in the \emph{first} column of  $\alpha$  after removing $\beta$.  The standard extended tableaux studied in this paper  were first introduced  for composition shape $\alpha$  in \cite{AS2019}, and later studied for skew compositions in \cite{LOPSvWwM2025}.  Both the skew standard immaculate tableaux and the skew standard extended tableaux of shape $\alpha/\beta$ span vector spaces  carrying various actions of the 0-Hecke algebra.

The row-strict 0-Hecke action on standard immaculate tableaux of arbitrary shape was introduced in \cite{NSvWVW2023}. It was shown in \cite{NSvWVW2024} that the row-strict action is related, via an involutive 0-Hecke automorphism,  to the dual immaculate action of the 0-Hecke algebra first defined by Berg, Bergeron, Saliola, Serrano and Zabrocki \cite{BBSSZ2015}.   For composition shape, the paper \cite{NSvWVW2024} studied the 0-Hecke modules resulting from the row-strict action, as well as other variants.   In particular, the \emph{immaculate Hecke poset} for composition shape $\alpha$ was introduced in \cite{NSvWVW2024}, where it was shown to play an important role in determining the structure of the resulting 0-Hecke modules.

We  describe the motivating question for our paper. 
In \cite{LOPSvWwM2025} it was shown that the row-strict 0-Hecke action on skew standard  immaculate  tableaux of shape  $\alpha/\beta$ defines a graded poset, called \emph{the skew immaculate Hecke poset}, that is bounded above and below with rank function determined by the number of inversions of the reading word,  thereby yielding several cyclic  modules for the 0-Hecke algebra.
It was further observed \cite[Proposition 49]{LOPSvWwM2025} that  the subposet of skew standard \emph{extended} tableaux is also  bounded above, but not necessarily bounded below \cite[Example 50 and Figure 1]{LOPSvWwM2025}.  Thus in this case, while the dual immaculate action of \cite{BBSSZ2015} yields a cyclic module generated by the unique top element of the skew poset,  the row-strict 0-Hecke action of \cite{NSvWVW2024} need no longer produce a cyclic module. 

 The present paper investigates the minimal elements of the skew standard extended poset $\SET(\alpha/\beta)$, for a fixed skew shape  $\alpha/\beta$.  These elements form a  minimal set of generators for the 0-Hecke module defined by the row-strict 0-Hecke action of \cite{LOPSvWwM2025} and \cite{NSvWVW2024} on the standard extended tableaux of skew shape  $\alpha/\beta$.
 
 Our first result, \Cref{thm:min_element_connected}, asserts that  for \emph{connected} diagrams of skew compositions, the poset $\SET(\alpha/\beta)$ has a unique minimal element; hence   the associated 0-Hecke module is cyclic.   We then undertake a detailed study of a special class of skew compositions that we call \emph{lobsters}. We will refer to this set of skew compositions as the \emph{lobster} class.  Here we give the size of the whole poset, which is also the dimension of the associated 0-Hecke module.
 
 For the lobster class we completely classify the minimal elements of the skew standard extended  Hecke poset, in \Cref{thm:lobster-min-elts}.  
 An algorithm to generate all the minimal elements  is also described.  We show that there can now be maximal chains of different lengths in the poset.   In the process we uncover some surprisingly  beautiful and elegant combinatorics  underlying the structure of these  minimal elements.  A detailed enumeration of the number of minimal elements and the lengths of the maximal chains is included in \Cref{thm:min_elts}.

 The paper is organised as follows.  \Cref{sec:bgrnd} reviews the necessary background, particularly definitions and results from \cite{LOPSvWwM2025}.  In \Cref{sec:key-lemmas}, we prove a useful lemma characterising minimal elements in $\SET(\alpha/\beta)$. \Cref{sec:connected-skew} analyses the case of connected skew shapes, and \Cref{sec:disconnected-skew} deals with the disconnected case. In this latter section we define the lobster shape 
 and reveal the pleasing features of the (no longer graded) skew standard extended 0-Hecke poset.  
 
\section{Background and definitions}\label{sec:bgrnd}

General references for this section are  \cite{LMvW2013},  \cite{MathasHecke1999}, \cite{RPS-EC1-2012} and \cite{LOPSvWwM2025}.

\subsection{Compositions, inversions and tableaux}

A {\emph{composition}} of  $n$ is a  sequence of positive  integers $\alpha = (\alpha_1, \alpha_2, \ldots,\alpha_k)$ summing to $n$, its \emph{size}. The $\alpha_i$ are the \emph{parts} of $\alpha$. We denote the number of parts,  $k$, by $\ell(\alpha)$, and the size $n$ of the composition by $|\alpha|$.
Write $\alpha\vDash n$ for a composition $\alpha = (\alpha_1, \alpha_2,\ldots,\alpha_k)$ of $n$, {and denote by $\emptyset$ the empty composition of 0}.

Compositions of $n$ are in bijection with subsets of $\{1,2,\ldots,n-1\} =[n-1]$. 
For a composition $\alpha\vDash n$, the corresponding set is $\set(\alpha) = \{\alpha_1,\alpha_1+\alpha_2,\ldots,\alpha_1+\cdots+\alpha_{k-1}\}$. 
Given a subset $S=\{s_1<s_2<\cdots<s_j\}$ of $[n-1]$, the corresponding composition of $n$ is $\comp(S)=(s_1,s_2-s_1,\ldots,s_j-s_{j-1},n-s_j)$. 

The \emph{diagram} of a composition $\alpha$ is a geometric 
depiction of $\alpha$  as left-justified cells, with 
$\alpha_i$ cells in the $i$th row, counting the bottom row as row 1. Given two compositions $\alpha, \beta$, we say that $\beta \subseteq \alpha$ if $\beta _j \leq \alpha _j$ for all $1\leq j \leq \ell (\beta) \leq \ell (\alpha)$, and given $\alpha, \beta$ such that $\beta \subseteq \alpha$, the \emph {skew diagram}  $\alpha / \beta$ is the array of {cells} in $\alpha$ but not $\beta$ when $\beta$ is placed in the bottom-left corner of $\alpha$.   For example, the diagrams of $\alpha={(4,2,3)}$   and $\alpha/\beta$ where $\beta = (2,1)$ are  respectively
\[\tableau{\phantom{} &\phantom{} &\phantom{} \\\phantom{} &\phantom{}\\ \phantom{} &\textbf{} &\phantom{} & \phantom{}} \qquad \text{and } \qquad {\tableau{\phantom{} &\phantom{} &\phantom{} \\  &\phantom{}\\   &  &\phantom{} & \phantom{}} }.\]
If $\beta=\emptyset$, then $\alpha/\beta=\alpha$.

We will usually suppress the cells of $\beta $ in the skew-diagram $\alpha/\beta$. However, sometimes, for clarity, it will be necessary to draw the cells of $\beta$, for example, when skewing by $\beta$ removes an entire row or column of $\alpha$.  

\begin{definition}\label{def:skew-SIT} 
Let $\alpha,\beta$ be compositions with $\beta \subseteq \alpha$. A \emph{skew standard immaculate tableau} of {\emph{shape}} $\alpha/\beta$ is a filling $T$ of the cells of the skew diagram $\alpha/\beta $ with distinct entries $1, 2, \ldots , |\alpha/\beta|=|\alpha|-|\beta|$ such that 
\begin{enumerate}[itemsep=1pt]
\item the leftmost (possibly empty) column  entries belonging to $\alpha$ but not $\beta$  
increase from bottom to top;
\item the entries in each row  increase from left to right. 
\end{enumerate}
In particular, if $\ell(\alpha)=\ell(\beta)$, then there is no restriction on the columns.
Let $\SIT(\alpha/\beta)$ denote the set {of}  skew standard  immaculate tableaux of shape $\alpha/\beta$.  
\end{definition}
\begin{example}\label{ex:skewSIT}  Let $\alpha=(3,4,4,1)$, $\beta=(1,2)$. Then 
 \ytableausetup{smalltableaux, boxsize=1.1em}
    $T=\,\,$\begin{ytableau}
        4 \\ 2  & 3 &6 &8\\\ &  \  &1 & 7\\ \ 
        & 5 &9 
    \end{ytableau}
    \ytableausetup{nosmalltableaux}
 is in $\SIT(\alpha/\beta)$.   
 \end{example}

\begin{definition}\label{def:skew-SET}  Let $\alpha,\beta$ be compositions with $\beta \subseteq \alpha$. 
Define $\SET(\alpha/\beta)$ to be the subset of $\SIT({\alpha/\beta})$ consisting of all \emph{skew standard extended tableaux} of \emph{shape} ${\alpha/\beta}$, that is, all tableaux $T$ of shape ${\alpha/\beta}$ where {all column entries increase bottom to top.}\end{definition}
\begin{definition}\label{def:skewels} For $\alpha \vDash n, \beta\vDash m$ with $\beta\subseteq\alpha$, we define the following special standard tableaux in $\SET(\alpha/\beta)$.
\begin{itemize}
\item $S^{\row}_{\alpha/\beta}$ is the \emph{row superstandard}  tableau  whose rows are filled left to right, \emph{bottom to top},  beginning with the bottom row and moving up, using the numbers $1,2, \ldots, n-m$ taken in consecutive order.  
\item $S^{\col}_{\alpha/\beta}$ is the \emph{column superstandard}   tableau  whose columns are  filled   bottom to top,  left to right, beginning with the leftmost column and moving   right, using the numbers $1,2,\ldots, n-m$  taken in consecutive order.  
 \end{itemize}
\end{definition}

\begin{example}\label{ex:special-skew-SIT-1}  Let $\alpha=(2,2,3,2,4)  \vDash 13$, $\beta=(2,1,2)\vDash 5$.  
Then 
\begin{center}
\ytableausetup{smalltableaux, boxsize=1.1em}
    $S^{\row}_{\alpha/\beta}=$\ \begin{ytableau}
        5& 6&7 &8\\ 3 & 4\\\  &\ &2\\ \  &1\\\  &\ 
    \end{ytableau}, \quad 
    $S^{\col}_{\alpha/\beta}=$ \ \begin{ytableau}
        2& 5&7 &8\\ 1 & 4\\ \  &\ &6\\ \   &3\\\  &\ 
    \end{ytableau}.
    \ytableausetup{nosmalltableaux}
    \end{center}
\end{example}

\begin{example}\label{ex:special-skew-SIT-2}  Let $\alpha=(5,4,6)\vDash 15$, $\beta=(2,1,2)\vDash 5$. Note that here $\ell(\alpha)=\ell(\beta)=3$.  

\begin{center}
\ytableausetup{smalltableaux,boxsize=1.1em}
    $S^{\row}_{\alpha/\beta}=$ \begin{ytableau}
        \  &\  &7 &8 &9 &10 \\ \    & 4&5 & 6 \\ \  &\  &1 &2 &3
    \end{ytableau}, \quad 
    $S^{\col}_{\alpha/\beta}=$  \begin{ytableau}
        \  & \  &4 &7 &9 &10 \\ \   &1 & 3 &6\\ \  &\  &2 &5 &8 
    \end{ytableau}.
    \ytableausetup{nosmalltableaux}
    \end{center}
\end{example}
Recall (e.g., \cite{RPS-EC1-2012}) that the \emph{inversion set of a permutation $\sigma \in S_n$} is
$$\mathrm{Inv} (\sigma) = \{ (p,q)\suchthat 1\leq p < q \leq n \mbox{ and } \sigma(p) > \sigma (q)\}.$$  The \emph{number of inversions} of $\sigma$  is denoted by $\ninv(\sigma) = |\mathrm{Inv}(\sigma)|.$ 

The \emph{reading word} $\rw(T)$ of a  tableau $T$ of skew shape $\alpha/\beta$ is  the word obtained by reading the entries of $T$  from right to left along rows, and from top to bottom. In Example \ref{ex:special-skew-SIT-1}, the reading word of $S^{\col}_{\alpha/\beta}$ is $87524163$, with 21 inversions. 
We can therefore define the number of inversions of a standard immaculate tableau $T $ to be the number of inversions in its reading word $\rw(T)$, and simply write $\inv(T)$ for $\inv \left(\rw(T)\right)$.   In particular, one sees that for any skew composition $\alpha/\beta$, $\inv(S^{\row}_{\alpha/\beta})=\binom{|\alpha/\beta|}{2}.$

\subsection{The 0-Hecke algebra}\label{sec:0-Hecke-algebra}
\begin{definition}\cite{MathasHecke1999} Let $\bK$ be any field.  The {\emph{0-Hecke algebra}} $H_n(0)$ is the $\bK$-algebra with 
generators $\pi_i, 1\le i\le n-1$, $n\ge 2$, and relations 
\begin{center}$ {\pi_i}^2 =\pi_i; \quad 
          \pi_i\pi_{i+1}\pi_i =\pi_{i+1}\pi_i\pi_{i+1};\quad
              \pi_i\pi_j =\pi_j\pi_i, \ |i-j|\ge 2.$\end{center}\end{definition}
When $n=1$ we have the trivial case $H_1(0)=\bK$.
 
The algebra $H_n(0)$ has dimension $n!$ over  $\bK$, with basis elements $\{\pi_\sigma: \sigma\in S_n\},$ where $\pi_\sigma=\pi_{i_1}\cdots \pi_{i_m}$ if $\sigma=s_{i_1}\cdots s_{i_m}$  is a reduced word.  This is well defined by standard Coxeter group theory, see \cite{MathasHecke1999}.

It is known  \cite{PamelaBromwichNorton1979} that the 0-Hecke algebra {$H_n(0)$} admits  precisely $2^{n-1}$  simple modules $L_\alpha$, 
one for each composition $\alpha\vDash n$, and all one-dimensional. Recall that $\set(\alpha)$ is the subset of $[n-1]$ associated to the composition $\alpha$.  The 0-Hecke action on the module $L_\alpha = \spam\{v_\alpha\}$ is defined as follows.  For each $i=1, \ldots, n-1$, we have 
$\pi_i(v_\alpha)=\begin{cases} 0 , &\text{if }  i\in \set(\alpha),\\
                                                v_\alpha, &\text{otherwise}.\end{cases}$

For the remainder of this paper we take the ground field to be the field  $\mathbb{C}$ of complex numbers.   To avoid trivialities we will also always assume that $n\ge 2$ when referring to the $\hn$-modules.

\subsection{The row-strict dual immaculate action}
\label{sec:rs-action} 
 We begin by recalling from \cite{NSvWVW2024, LOPSvWwM2025} the definition of  the row-strict immaculate action, referred to as the $\rdI$-action, which gives the vector space with basis  $\SIT(\alpha/\beta)$ 
 of skew standard immaculate tableaux,   the structure of a  module for the 0-Hecke algebra. Let $s_i$ be the operator switching the entries $i$, $i+1$ in a tableau $T$. The $\rdI$-action of the  Hecke generator $\pi_i$  on  $T\in \SIT(\alpha/\beta)$ is given as follows.  
\[\pi_i^{\rdI}(T)=\pi _i(T)=\begin{cases} T, &  \text{if $i+1$ is strictly above $i$ in $T$},\\
                        s_i(T), & \text{if $i+1$ is strictly below $i$ in $T$},\\
                        0, & \text{otherwise}.
 \end{cases}\]

Extending the results of \cite[Sections 5 and 6]{NSvWVW2024}, it was shown in \cite[Theorem 41]{LOPSvWwM2025} that this action has the following features.
\begin{enumerate}
\item  It defines a bounded and graded poset $P\rdI_{\alpha/\beta}$ on the elements of $\SIT(\alpha/\beta)$. 
The number of inversions in the reading word $\rw(T)$ of $T$  determines the rank of a standard immaculate tableau $T\in \SIT(\alpha/\beta)$; for the precise statements see \cite[Remark 6.14]{NSvWVW2024} and \cite[Proposition 48]{LOPSvWwM2025}.

\item It makes the vector space with basis $\SIT(\alpha/\beta)$ a cyclic module that we call $\mathcal{V}_{\alpha/\beta}$, for the 0-Hecke algebra, generated by the  unique minimal element in the poset.
\end{enumerate}

Now we turn to the central objects in this paper, the skew standard extended tableaux.  It was shown in  \cite[Proposition 29]{LOPSvWwM2025} that the subspace of $\SIT(\alpha/\beta)$ with basis $\SET(\alpha/\beta)$
is itself a 0-Hecke module for  the  $\rdI$-action.  Recall (e.g. \cite{RPS-EC1-2012}) that the length of a chain $x_0<x_1<\cdots<x_r$ in a poset is $r$,  one less than the number of elements in the chain.
\begin{theorem}\cite[Proposition 49]{LOPSvWwM2025}\label{prop:skew-SET-module-top-element} Let $\alpha ,\beta $ be compositions with $\beta \subseteq \alpha$. Then the subposet of $P\rdI_{\alpha/\beta}$ whose elements are in  $\SET(\alpha/\beta)$ has a unique maximal element, namely the unique maximal element $S^{\row}_{\alpha/\beta}$ of $P\rdI_{\alpha/\beta}$. Moreover:

\begin{enumerate}
\item 
This  subposet has the same rank function as $P\rdI_{\alpha/\beta}$, meaning that for fixed 
$T\in \SET(\alpha/\beta)$, all maximal chains from $T$ to the maximal element $S^{\row}_{\alpha/\beta}$ have the same length, equal to $\inv(S^{\row}_{\alpha/\beta})-\inv(T)$.   
\item 
 For the $\rdI$-action, 
the  minimal elements of the subposet   generate 
 $\mathrm{span}\{T :T\in\SET(\alpha/\beta)\}$ as a submodule of  the cyclic $\rdI$-module  $\mathcal{V}_{\alpha/\beta}$ defined on the set $\SIT(\alpha/\beta)$.
\end{enumerate}
\end{theorem}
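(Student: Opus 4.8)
The plan is to combine two facts recorded earlier: that $P\rdI_{\alpha/\beta}$ is bounded and graded by $\inv$, with covers given by single Hecke swaps (\cite[Theorem 41]{LOPSvWwM2025}), and that $\spam\{T:T\in\SET(\alpha/\beta)\}$ is an $H_n(0)$-submodule of $\mathcal{V}_{\alpha/\beta}$ (\cite[Proposition 29]{LOPSvWwM2025}). The one structural fact from which everything flows is an \emph{upward closure} property: if $T\in\SET(\alpha/\beta)$ and $T'$ covers $T$ in $P\rdI_{\alpha/\beta}$, then $T'\in\SET(\alpha/\beta)$ too. Indeed, a cover $T\lessdot T'$ occurs precisely when $\pi_i(T)=s_i(T)=T'\neq T$ for some $i$, so $T'=\pi_i(T)\in\spam\{T:T\in\SET(\alpha/\beta)\}$ by the submodule property; since $T'$ is a single basis tableau of $\SIT(\alpha/\beta)$ and distinct tableaux are linearly independent, this forces $T'\in\SET(\alpha/\beta)$. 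Iterating along saturated chains, the entire principal filter $\{U\in P\rdI_{\alpha/\beta}: U\geq T\}$ of any $T\in\SET(\alpha/\beta)$ lies in $\SET(\alpha/\beta)$.

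Next I would pin down the top element. The reading word of $S^{\row}_{\alpha/\beta}$ is the strictly decreasing word $|\alpha/\beta|,|\alpha/\beta|-1,\ldots,1$, so $\inv(S^{\row}_{\alpha/\beta})=\binom{|\alpha/\beta|}{2}$ is the maximum possible value of $\inv$ on any filling, and $S^{\row}_{\alpha/\beta}$ is the unique tableau of shape $\alpha/\beta$ attaining it, since the fixed reading order together with the fixed shape recovers the filling from its reading word. As $P\rdI_{\alpha/\beta}$ is graded by $\inv$ and bounded above, its unique maximal element is the unique element of top rank, namely $S^{\row}_{\alpha/\beta}$; and since $S^{\row}_{\alpha/\beta}\in\SET(\alpha/\beta)$ it is also the unique maximal element of the subposet. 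For the rank claim in part (1), upward closure shows that for any $T\in\SET(\alpha/\beta)$ the interval $[T,S^{\row}_{\alpha/\beta}]$ computed in $P\rdI_{\alpha/\beta}$ lies entirely in $\SET(\alpha/\beta)$, hence coincides with the corresponding interval of the subposet. Thus saturated chains from $T$ to $S^{\row}_{\alpha/\beta}$ agree in the two posets, and all have length $\inv(S^{\row}_{\alpha/\beta})-\inv(T)$ by the grading.

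For part (2), let $M$ be the submodule of $\mathcal{V}_{\alpha/\beta}$ generated by the minimal elements of the subposet. I would prove $M=\spam\{T:T\in\SET(\alpha/\beta)\}$ by induction on $\inv(T)$. A minimal element lies in $M$ by definition. Otherwise there is some $T''\in\SET(\alpha/\beta)$ with $T''<T$; choosing a saturated chain from $T''$ up to $T$, which stays in $\SET(\alpha/\beta)$ by upward closure, its penultimate element $T'$ satisfies $T'\lessdot T$, so $T=\pi_i(T')$ for the index $i$ realising that cover. Since $\inv(T')<\inv(T)$, the inductive hypothesis gives $T'\in M$, whence $T=\pi_i(T')\in M$ because $M$ is a submodule. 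This places every basis tableau of $\SET(\alpha/\beta)$ in $M$, so the minimal elements generate the module.

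The genuinely nontrivial input is the submodule property \cite[Proposition 29]{LOPSvWwM2025}, which I am assuming; granting it, the only point demanding care is the upward-closure argument, and I expect it to be the crux. It is exactly what guarantees that the covers of the subposet coincide with single $\pi_i$-steps and that intervals are not punctured by intervening non-extended tableaux; without it the $\inv$-grading need not descend cleanly to the subposet and the rank formula in (1) could fail. Everything else is bookkeeping with the $\inv$-grading.
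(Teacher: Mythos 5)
This statement is quoted from \cite[Proposition~49]{LOPSvWwM2025} and the present paper gives no proof of it, so there is nothing in this document to compare your argument against line by line; I can only assess it on its own terms. On those terms it is correct and well organised. Your one genuinely load-bearing observation is the upward-closure property --- that if $T\in\SET(\alpha/\beta)$ and $T'$ covers $T$ in $P\rdI_{\alpha/\beta}$ then $T'\in\SET(\alpha/\beta)$ --- and your derivation of it from the submodule property of \cite[Proposition~29]{LOPSvWwM2025} is sound: a nontrivial cover is $T'=\pi_i(T)=s_i(T)$, this lies in $\spam\{U:U\in\SET(\alpha/\beta)\}$, and a single basis tableau in that span must itself be extended. (You could equally prove upward closure directly: for $T\in\SET(\alpha/\beta)$ with $i+1$ strictly below $i$, the two entries cannot share a column, and a short check of row and column neighbours shows $s_i(T)$ is again extended; this would make the argument independent of Proposition~29 for part~(1), though you would still need the submodule property for part~(2).) The identification of $S^{\row}_{\alpha/\beta}$ as the unique tableau with $\binom{|\alpha/\beta|}{2}$ inversions, the interval argument for the rank claim, and the induction on $\inv(T)$ for generation by minimal elements are all fine, granted the cited facts that $P\rdI_{\alpha/\beta}$ is graded by $\inv$ with covers given by single nontrivial $\pi_i$-steps. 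I see no gap.
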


The preceding theorem is the generalisation of the following result of \cite{NSvWVW2024} to skew tableaux.

\begin{theorem}[{\cite[Lemma 7.9, Theorem 7.13]{NSvWVW2024}}]\label{thm:cyclicSET-alpha}
 The subposet $\SET(\alpha)$ of $\SIT(\alpha)$ coincides with the interval $[S^{\col}_{\alpha}, S^{\row}_\alpha] $ of $\SIT(\alpha)$. Hence the 0-Hecke module with basis  $\SET(\alpha)$ 
 is an $\rdI$-submodule of the module $\mathcal{V}_{\alpha}$ with basis $\SIT(\alpha)$, and it is cyclically generated by $S^{\col}_{\alpha}$.  It is indecomposable.
\end{theorem}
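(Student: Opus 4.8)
The plan is to deduce the interval description, and hence cyclicity, from the already-available structure of the ambient poset, reserving the genuinely new work for two points: the uniqueness of the minimal element, and indecomposability. First I would record that $\SET(\alpha)$ is an order filter (up-set) inside the $\inv$-ranked poset $\SIT(\alpha)$: if $T\in\SET(\alpha)$ and $\pi_i(T)=s_i(T)$ (the case where $i+1$ lies strictly below $i$, producing the upward cover $T\pocover s_i(T)$), then a direct inspection shows that swapping the consecutive values $i,i+1$ keeps every column strictly increasing, so $s_i(T)\in\SET(\alpha)$; this is exactly the submodule statement \cite[Proposition 29]{LOPSvWwM2025} specialised to $\beta=\emptyset$. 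Together with the fact that $S^{\row}_{\alpha}$ is the global maximum of $\SIT(\alpha)$ (\Cref{prop:skew-SET-module-top-element} with $\beta=\emptyset$), this reduces the first assertion to a single claim: \emph{$\SET(\alpha)$ has a unique minimal element, namely $S^{\col}_{\alpha}$}. Indeed, an order filter possessing a unique minimal element $m$ coincides with $\{T:m\po T\}$, which here is the interval $[S^{\col}_{\alpha},S^{\row}_{\alpha}]$.

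The heart of the argument is therefore the classification of the minimal elements of $\SET(\alpha)$. I would first describe the downward covers inside $\SET(\alpha)$ combinatorially. A downward cover of $T$ is obtained by locating an $i$ for which $i+1$ occupies a strictly higher row than $i$ and swapping the two values; since $i$ and $i+1$ are consecutive integers this always changes $\inv$ by exactly $1$, so every such swap that lands in $\SET(\alpha)$ is a genuine cover. A short case check (which I would carry out on the two affected columns and two affected rows) shows that the swapped tableau remains in $\SET(\alpha)$ precisely when $i$ and $i+1$ lie in \emph{different} columns; when they share a column they are necessarily vertically adjacent, and the swap destroys strict column increase. Consequently $T$ is minimal in $\SET(\alpha)$ if and only if it satisfies the property $(\mathrm P)$: for every $i$, either $i+1$ sits weakly below $i$, or $i+1$ sits in the cell directly above $i$. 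One checks immediately that $S^{\col}_{\alpha}$ satisfies $(\mathrm P)$. The main obstacle is the converse: that $(\mathrm P)$ together with strict increase along rows and columns forces $T=S^{\col}_{\alpha}$. I would prove this by induction on the entries, showing that $1$ is pinned to the bottom cell of the leftmost column and that $(\mathrm P)$ propagates the column-reading order defining $S^{\col}_{\alpha}$ — the subtle point being that for a genuine composition the ``columns'' need not be contiguous, so the induction must track, for each value, the lowest still-unfilled cell of the current column and the forced jump to the next column once a column is exhausted.

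Granting the interval description, cyclicity is immediate: each covering relation $T'\pocover T$ in $[S^{\col}_{\alpha},S^{\row}_{\alpha}]$ is realised by a single generator, namely $\pi_i(T')=s_i(T')=T$ for the appropriate $i$, so climbing any saturated chain from $S^{\col}_{\alpha}$ up to a given $T$ exhibits $T=\pi_{\sigma}(S^{\col}_{\alpha})$ for a suitable word $\sigma$. Hence $\hn\cdot S^{\col}_{\alpha}$ is all of $M:=\operatorname{span}\{T:T\in\SET(\alpha)\}$, which recovers the ``minimal elements generate'' conclusion of \Cref{prop:skew-SET-module-top-element} with the single generator $S^{\col}_{\alpha}$.

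Finally, for indecomposability I would use that a cyclic module is indecomposable as soon as it has a unique maximal submodule, i.e.\ a simple top $M/\mathrm{rad}\,M\cong L_{\gamma}$: any nontrivial direct sum decomposition would express $M$ as a sum of two proper — hence radical — submodules, a contradiction. Thus the task is to identify the top of $M$. The cleanest route is to transport the question across the involutive $0$-Hecke automorphism that intertwines the $\rdI$-action with the dual immaculate action of \cite{BBSSZ2015}: under this automorphism $M$ corresponds to a dual immaculate module, and these are known to be indecomposable, so $M$ is too. Alternatively one can argue directly that modulo $\mathrm{rad}\,\hn\cdot M$ every basis tableau is congruent to a scalar multiple of the generator $S^{\col}_{\alpha}$, making the top one-dimensional; establishing this congruence is the second genuine obstacle, and is where the poset's unique-minimum structure is used once more.
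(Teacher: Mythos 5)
You should first be aware that the paper does not reprove this statement: it is quoted from \cite{NSvWVW2024}, and the only proof material appearing here is the straightening algorithm recalled in the proof of \Cref{thm:min_element_connected}, which specialises to $\beta=\emptyset$. Your overall architecture (order filter, plus unique minimal element, implies interval, implies cyclic) is sound, and your criterion $(\mathrm{P})$ is exactly \Cref{lem:key-min} specialised to straight shapes. However, the step you yourself identify as the heart of the argument --- that $(\mathrm{P})$ forces $T=S^{\col}_{\alpha}$ by a value-by-value induction --- has a genuine gap. Property $(\mathrm{P})$ applied to the pair $(k-1,k)$ does \emph{not} pin down the position of $k$ given the positions of $1,\dots,k-1$: for $\alpha=(2,2)$, once $1$ occupies the bottom-left cell, $(\mathrm{P})$ for $i=1$ allows $2$ to sit either directly above $1$ (correct) or immediately to its right in the bottom row (incorrect), and the incorrect choice is only detected by $(\mathrm{P})$ applied to the later pair $(2,3)$. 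In general a filling can prematurely start a new column while satisfying $(\mathrm{P})$ for every pair seen so far; the contradiction surfaces only when one asks which value $m$ eventually fills the lowest skipped cell of the abandoned column and where $m-1$ must then sit. So your induction needs a look-ahead argument, which is precisely what the straightening algorithm packages cleanly: locate the lowest cell of the leftmost column where $T$ disagrees with $S^{\col}_{\alpha}$, say with entry $i$, and show that $i-1$ must lie strictly below $i$ in a column to its right, producing a downward cover.

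Second, the indecomposability claim is not actually established. Your reduction ``cyclic with simple top implies indecomposable'' is correct (cyclicity alone is not enough, as you note), but both of your routes to a simple top are incomplete: the congruence of every basis tableau to the generator modulo the radical is asserted rather than proved, and the transport across the involutive automorphism only helps if one already has an independent proof that the \emph{extended} dual immaculate module is indecomposable. The indecomposability results of \cite{BBSSZ2015} concern the module on all of $\SIT(\alpha)$, not the submodule spanned by $\SET(\alpha)$; the extended statement is essentially the one being proved (it is the content of \cite[Theorem 7.13]{NSvWVW2024}), so as written this part of the argument is either circular or deferred.
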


By abuse of notation, we will use $\SET(\alpha/\beta)$ to refer to both the 0-Hecke $\rdI$-poset structure and   the set of standard extended tableaux, relying on context to make the distinction clear.  As mentioned in the Introduction, our goal is to study the structure of the Hecke poset $\SET(\alpha/\beta)$, focusing in particular on the minimal elements.

We emphasise here that since the Hecke poset $\SET(\alpha/\beta)$ need not have a unique minimal element, it is not a bounded or  graded 
poset in the customary sense \cite{RPS-EC1-2012}. 
\Cref{fig:ungradedSET} shows an example where maximal chains have different lengths. 
Here we have $\alpha=(4,2,4)$ and $\beta=(2,1,3)$. 
 
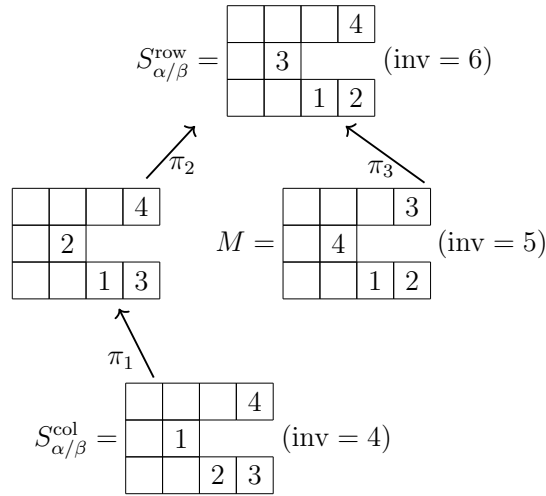
\begin{figure}[htb] \centering
\begin{center}{
		\scalebox{.9}{			
			\begin{tikzpicture}
			\newcommand*{\xdist}{*2}
			\newcommand*{\ydist}{*1.8}
\node (n3-11) at (.35\xdist, 2.4\ydist)  {$(\inv=4)$};   
\node (n31) at (-1\xdist,2.4\ydist) 
			{$S^{\col}_{\alpha/\beta}= 
			\tableau{ 
				\  &\ &\   &4  \\ 
				\  &1 \\
                   \      &\  &2 &3}  $
                                        } ;
\node (n71) at (-1.5\xdist,4\ydist) 
			{$\tableau{ 
			\	  &\ &\   &4  \\ 
			\	  &2 \\
                                     \   &\ & 1 &3}  $
                                        } ;
  \node (n7-12) at (1.5\xdist, 4\ydist)  {$(\inv=5)$};
\node (n72) at (.25\xdist,4\ydist) 
			{$M= \tableau{ 
				\  &\ &\   &3  \\ 
				\  &4 \\
                                     \   &\ & 1 &2}  $
                                        } ;
 \node (n8-11) at (1.1\xdist, 5.5\ydist)  {$(\inv=6)$};                                    
\node (n8) at (-0.25\xdist,5.5\ydist) 
			{  ${S^{\row}_{\alpha/\beta} = 
				\tableau{ 
				\ &\ &\   &  4 \\ 
				\ &3 \\
                                   \    &\  & 1 &2} }  $ 
			}; 
\draw [thick, ->] (-1\xdist,2.9\ydist)  -- (n71) node [near start, left] {$\pi_{1}$}; 

 \draw [thick, ->] (n71) -- (n8) node [near start, right] {$ \pi_{2}$}; 
\draw [thick, ->] (1\xdist,4.5\ydist) -- (n8) node [near start, left] {$\pi_{3}$};               	
			\end{tikzpicture}	
}			
}\end{center}
\caption{The ungraded skew Hecke poset $\SET(\alpha/\beta)$ for $\alpha=(4,2,4), \beta=(2,1,3)$, with two minimal elements $M$ and $S^{\col}_{\alpha/\beta}$ having different numbers of inversions.}\label{fig:ungradedSET}
\end{figure}
\subsection{Classification of skew compositions}
In order to analyse the Hecke poset $\SET(\alpha/\beta)$ associated to a skew composition 
$\alpha/\beta$, we  first classify skew compositions according to the nature of their connectedness, as illustrated in  \Cref{tab:skew_diagram_shapes}.  

The following definition is motivated by the fact that 
the  structure of the skew Hecke poset is unaffected by the presence of empty rows in the skew shape.  This is  a consequence of the more general observation that there are many pairs $(\alpha, \beta)$ giving the same skew shape $\alpha/\beta$.  

\begin{definition}\label{def:reduced_disjoint}
Call $\alpha/\beta$ \emph {reduced} if it has no empty rows (i.e. rows $i$ with $\alpha_i=\beta_i$), as a skew shape.

As an example, let $\alpha = (2,2,3)$ and $\beta=(1,2,2)$. Then $(2,2,3)/(1,2,2)$ can be reduced to  $(2,3)/(1,2)$:
\ytableausetup{smalltableaux,boxsize=1.1em}
$$ (2,2,3)/(1,2,2) = \begin{ytableau}
\bullet&\bullet& \\
\bullet&\bullet \\
\bullet&\end{ytableau},
\text{ which we reduce by removing the middle row: }
\begin{ytableau}
    \bullet&\bullet&\\
    \bullet&\end{ytableau}.$$

Call $\alpha/\beta$  \emph{connected} if, in its reduced shape, for every pair of  consecutive rows, there is at least  one column in which there is a cell in each row of the pair.

Call $\alpha/\beta$ a \emph{disjoint sum} of two  nonempty skew shapes $A$ and  $B$
if $A,B \subseteq \alpha/\beta$, $A\cup B = \alpha/\beta$, $A\cap B = \emptyset$, and the smallest interval containing  all the rows (resp. columns) of $A$ is disjoint from
the smallest interval containing all  the rows (resp. columns) of $B$.

The extension to more than two shapes is clear.  
\end{definition}
    \begin{table}[h!]
        \centering
        \renewcommand{\arraystretch}{1.7}
        \begin{tabular}{c|c|c}
             Connected & Disjoint sum & 
             Neither connected nor a disjoint sum\\
             \hline
             \rule{0pt}{10ex}\young(:~~~,~~,:~~) & \young(:::~~,:::~,~~) &\young(:~,~,:~)\\
             \hline 
             $\alpha = (4,3,5), \beta = (2,1,2)$& $\alpha = (3,5,6), \beta = (1,4,4)$ & $\alpha = (3,2,3), \beta = (2,1,2)$\\[2pt]
        \end{tabular}
        
        \caption{Respectively, a connected skew diagram, a disjoint sum of two connected components, and a diagram that is neither connected nor a disjoint sum of connected components. The latter is not a disjoint sum because the cells in the right column are in rows $1$ and $3$, but the interval containing those rows, namely $[1,3]$, is not disjoint from $\{2\}$, the row for the cell in the left column.
        }
        \label{tab:skew_diagram_shapes}
    \end{table}        

\section{A characterisation of the minimal elements in $\SET(\alpha/\beta)$}\label{sec:key-lemmas}

In this section we collect some preliminary observations for our analysis of the minimal elements in $\SET(\alpha/\beta)$.

\begin{lemma}\label{lem:key-min} Let $M\in \SET(\alpha/\beta)$.  Then $M$ is a minimal element if and only if, for every $i=1,2,\ldots, n-1$, one of the following holds:
\begin{enumerate}
\item $i, i+1$ are consecutive elements  in the same row of $\alpha/\beta$: $\tableau{{\scriptstyle i } & {\scriptstyle i+1}}$\, , 
\item $i, i+1$ are  consecutive elements in the same column of $\alpha/\beta$, i.e., with no cell between them:\quad 
$\tableau{{\scriptstyle i +1}\\  {\scriptstyle i}}$ \quad \quad or
\quad\quad $\tableau{{\scriptstyle i+1}\\ \\ {\scriptstyle i}}$\, ,  
\item $i$ is in a row of $\alpha/\beta$ \emph{higher} than $i+1$: 
 $\tableau{{\scriptstyle i}\\ & &   &\\ &  &  &{\scriptstyle i+1}} $ \text{ or }  
$\tableau{& & & {\scriptstyle i}\\  & \\  {\scriptstyle i+1}}$\, .
\end{enumerate}
\end{lemma}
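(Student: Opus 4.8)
The plan is to characterise minimality directly through the covering relations of the poset, which by construction arise solely from the nontrivial part of the $\rdI$-action (the cases where $\pi_i$ acts as the transposition $s_i$). First I would record the precise form of a lower cover: a tableau $U\in\SET(\alpha/\beta)$ has a lower cover $V$ precisely when there is an index $i$ with $V=s_i(U)\in\SET(\alpha/\beta)$ and $\pi_i(V)=U$. Since $\pi_i(V)=s_i(V)$ exactly when $i+1$ lies strictly below $i$ in $V$, and the cells holding $i,i+1$ in $V=s_i(U)$ are those holding $i+1,i$ in $U$, this condition translates into a statement purely about $U$: namely that $i+1$ lies strictly above $i$ in $U$ \emph{and} that the swapped filling $s_i(U)$ is again a valid skew standard extended tableau. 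Hence $U$ is a minimal element if and only if no index $i$ satisfies both of these.

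Next I would reduce to a finite case analysis on the relative positions of the cells containing $i$ and $i+1$ in $U$, exploiting that these are consecutive integers. Because rows increase strictly left to right and there is no integer strictly between $i$ and $i+1$, if $i,i+1$ occupy the same row they must be adjacent, giving case (1); here $i+1$ is not strictly above $i$, so no lower cover arises from $i$. Likewise, if $i$ sits in a strictly higher row than $i+1$ we are in case (3), and again $i+1$ is not strictly above $i$. The only remaining possibility is that $i+1$ is strictly above $i$, and here everything hinges on whether the two cells share a column.

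The crux of the argument, and the step I expect to require the most care, is to show that when $i+1$ is strictly above $i$ the swap $s_i(U)$ is a valid tableau if and only if the two cells lie in different columns. For this I would observe that $s_i$ alters only the two entries $i,i+1$, so the only row- and column-inequalities that can be disturbed are those between these two cells and their immediate neighbours; using that no integer lies strictly between $i$ and $i+1$, I would verify that every relevant neighbour inequality is preserved exactly when the two cells are in different columns (the right-hand neighbour of the cell gaining $i+1$, and the left-hand neighbour of the cell receiving $i$, cannot themselves equal $i+1$ or $i$, and so on). In the same-column case (case (2)), after the swap the lower cell of that column would hold $i+1$ while a higher cell holds $i$, breaking strict increase up the column; here I would also note that for consecutive integers sharing a column there can be no shape-cell strictly between them, so both configurations depicted in case (2)—adjacent cells and cells separated by a gap—are ruled out uniformly and need no separate treatment.

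Combining these observations, a lower cover exists through the index $i$ precisely in the ``different column, $i+1$ strictly above $i$'' configuration, and in no other. Therefore $U$ is minimal if and only if this configuration fails for every $i$, i.e. if and only if one of (1), (2), (3) holds for each $i$, which is the asserted characterisation.
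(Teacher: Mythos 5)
Your proposal is correct and follows essentially the same route as the paper, which simply declares the characterisation immediate from the definition of the $\rdI$-action: $M$ fails to be minimal exactly when some $i$ sits in a lower row than $i+1$ and not in the same column. Your write-up supplies the routine verifications the paper omits (in particular, that swapping consecutive entries lying in different columns always yields a valid standard extended tableau), but adds no new idea beyond the paper's one-line argument.
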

\begin{proof} This is immediate from the definition of the $\rdI$-action in \Cref{sec:rs-action}; $M$ is \emph{not} minimal if and only if there is a pair $(i, T)$,  $1\le i\le n-1$ and $T\in \SET(\alpha/\beta), T\ne M, $ such that \[\pi^{\rdI}_i(T)=\pi_i(T)=M,\] 
i.e., if and only if 
in $M$, $i$ appears in a lower row than $i+1$, and not in the same column.
\end{proof}
\begin{remark}
The preceding lemma immediately leads to the following characterisation of when $S^\col_{\alpha/\beta}$ is a minimal element of $\SET(\alpha/\beta)$:

    $S^\col_{\alpha/\beta}$ is a minimal element of $\SET(\alpha/\beta)$ if and only if 
    for every pair of consecutive nonempty columns, the highest cell in the left column of the pair is in a row weakly above the lowest cell in the right column.
\end{remark}

We close this section with a definition and an observation.  Let $\alpha/\beta$ be any skew shape.  For each cell $c$ of $\alpha/\beta$, define the \textit{left shadow} of $c$ to be  the number $\mathrm{ls}(c)$ of cells weakly southwest of it, excluding the cell itself.  One sees 
  that $\inv(S^{\col}_{\alpha/\beta})=\sum_{c \in \alpha/\beta} \mathrm{ls}(c)$    for any shape.  

\section{$\SET(\alpha/\beta)$ for connected $\alpha/\beta$}\label{sec:connected-skew}

 It is already known that $S^\col_{\alpha/\beta}$ is the unique minimal element of $\SET{(\alpha/\beta)}$ in the special case when $\beta=\emptyset$ (see Theorem~\ref{thm:cyclicSET-alpha} and \cite{NSvWVW2024}) or when  $\alpha/\beta$ is a rim hook \cite[\S 3.2]{HuangJia2016}, i.e., a skew shape such that for every pair of consecutive rows, there is exactly one column  in which both rows have a cell.
  
In analogy with \Cref{thm:cyclicSET-alpha}, \Cref{thm:min_element_connected} below is a generalisation  to arbitrary skew tableaux.  

\begin{theorem}\label{thm:min_element_connected}
    If $\alpha/\beta$ is connected, then $\SET(\alpha/\beta)$ has a unique minimal element, $S^\col_{\alpha/\beta}$.  Equivalently, the skew extended Hecke poset $\SET(\alpha/\beta)$ coincides with the interval $[S^\col_{\alpha/\beta}, S^\row_{\alpha/\beta}]$ in $\SIT(\alpha/\beta)$, and its   rank is $\binom{|\alpha/\beta|}{2}-\sum_{c \in \alpha/\beta} \mathrm{ls}(c)  $. 
\end{theorem}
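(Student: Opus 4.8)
The plan is to deduce all three assertions from a single statement: for connected $\alpha/\beta$, the tableau $S^{\col}_{\alpha/\beta}$ is the \emph{unique} minimal element of $\SET(\alpha/\beta)$. Everything else then follows formally. A finite poset with a unique minimal element has that element as a global minimum, and $S^{\row}_{\alpha/\beta}$ is already the unique maximal element by \Cref{prop:skew-SET-module-top-element}, so $\SET(\alpha/\beta)\subseteq[S^{\col}_{\alpha/\beta},S^{\row}_{\alpha/\beta}]$. For the reverse inclusion I would use that $\SET(\alpha/\beta)$ is an $\rdI$-submodule \cite[Proposition 29]{LOPSvWwM2025}, hence closed under every $\pi_i$; since the covering relations of the poset are exactly the up-moves $T\lessdot\pi_i(T)$ (those $i$ with $\pi_i(T)=s_i(T)\ne T$), any $T$ with $S^{\col}_{\alpha/\beta}\le T$ is obtained from $S^{\col}_{\alpha/\beta}\in\SET(\alpha/\beta)$ by a chain of $\pi_i$'s and so remains in $\SET(\alpha/\beta)$. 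This yields the interval description, and the rank statement is then immediate from \Cref{prop:skew-SET-module-top-element}(1): a maximal chain has length $\inv(S^{\row}_{\alpha/\beta})-\inv(S^{\col}_{\alpha/\beta})=\binom{|\alpha/\beta|}{2}-\sum_{c\in\alpha/\beta}\mathrm{ls}(c)$, using the two inversion formulas recorded just before \Cref{sec:connected-skew}.

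The geometric heart is a consequence of connectedness that I would isolate as a lemma: \emph{for any two consecutive nonempty columns $j$ and $j+1$, the topmost cell of column $j$ lies weakly above the bottommost cell of column $j+1$.} I would prove this by contradiction. Let $r$ be the row of the bottommost cell of column $j+1$; if $r=1$ the claim is trivial, so assume $r\ge 2$ and suppose every cell of column $j$ lies strictly below row $r$. Then row $r$ meets column $j+1$ but not column $j$, and since rows are intervals of columns its leftmost cell is $j+1$. Choose a row $s<r$ meeting column $j$ (one exists as column $j$ is nonempty); because no row below $r$ can reach column $j+1$, any such row ends exactly at column $j$. Since consecutive rows overlap, an upward induction from $s$ shows every row $s,s+1,\dots,r-1$ ends at a column $\le j$. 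In particular row $r-1$ lies in columns $\le j$ while row $r$ lies in columns $\ge j+1$, so these consecutive rows are disjoint, contradicting connectedness. Together with the Remark following \Cref{lem:key-min}, this lemma shows at once that $S^{\col}_{\alpha/\beta}$ is minimal.

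For uniqueness I would show that every $M\in\SET(\alpha/\beta)$ with $M\ne S^{\col}_{\alpha/\beta}$ fails the criterion of \Cref{lem:key-min}. Order the cells as $c_1,c_2,\dots$ by reading columns left to right and, within a column, bottom to top, so that $S^{\col}_{\alpha/\beta}$ is precisely the filling with $k$ in $c_k$. Let $k$ be least with $M(c_k)\ne k$; then $c_1,\dots,c_{k-1}$ carry $1,\dots,k-1$, the cell $c_k$ carries some value $>k$, and $k$ sits in a later cell $c_m$. Column-strictness rules out $c_m$ lying above $c_k$ within the same column (entries would have to decrease upward), and all values $<k$ already fill the columns weakly left of $c_k$; hence $k$ occupies the bottom cell of a column strictly to the right of $c_k$. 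The goal is then to exhibit consecutive values $i,i+1$ with $i$ strictly below $i+1$ in distinct columns, contradicting minimality. \textbf{This step is the main obstacle.} It requires combining minimality of $M$ (which, by \Cref{lem:key-min}, permits an entry to sit strictly below its successor only inside a single column) with the column-overlap lemma, in order to track the entries $k-1,k,k+1,\dots$ across the gap between the column of $c_k$ and the column holding $k$, and to show that this misplacement cannot be absorbed without creating a forbidden pair. I expect this to be organised as a careful induction on $k$, the overlap lemma guaranteeing at each stage that the only placement of $k$ compatible with minimality is the cell $c_k$, thereby forcing $M=S^{\col}_{\alpha/\beta}$ and matching the known straight-shape case of \Cref{thm:cyclicSET-alpha}.
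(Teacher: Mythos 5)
Your proposal is incomplete: the step you yourself flag as ``the main obstacle'' --- exhibiting, for every $M\ne S^{\col}_{\alpha/\beta}$, a pair of consecutive values $i,i+1$ with $i$ strictly below $i+1$ in distinct columns --- is precisely the content of the uniqueness claim, and uniqueness is the heart of the theorem. What you do prove is correct: the formal deduction of the interval and rank statements from uniqueness is fine, and your column-overlap lemma (for consecutive nonempty columns $j,j+1$ of a connected shape, the topmost cell of column $j$ is weakly above the bottommost cell of column $j+1$) is a clean, correctly proved isolation of where connectedness enters; combined with the Remark after \Cref{lem:key-min} it shows $S^{\col}_{\alpha/\beta}$ is minimal. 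But that is the easy half, and the proposal stops exactly where the real work begins.

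The reason you get stuck is that you track the wrong object: you ask where the \emph{value} $k$ went, whereas the argument closes if you track the \emph{entry} $v=M(c_k)$ occupying the first wrong cell and ask where $v-1$ is. Since $v>k$ and $c_1,\dots,c_{k-1}$ carry $1,\dots,k-1$, the entry $v-1\ge k$ is not among them; column-strictness rules out its being above $c_k$ in the same column; hence $v-1$ lies in a column strictly to the right of $c_k$. Conditions (1) and (2) of \Cref{lem:key-min} are then impossible for the pair $(v-1,v)$, so minimality of $M$ would force $v-1$ into a row strictly above $v$ as well as strictly to its right --- and connectedness (a monotone north-east path of cells from $c_k$ to the cell of $v-1$, which your overlap lemma can be used to construct row by row) forces $v<v-1$, a contradiction. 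This is exactly how the paper argues, packaged as a straightening algorithm (Lemma 7.9 of the earlier paper of Niese--Sundaram--van Willigenburg--Vega--Wang): repeatedly swap $v$ with $v-1$, each swap being the inverse of a $\pi_{v-1}$-move that lowers $\inv$ by one, yielding a saturated chain from any $T$ down to $S^{\col}_{\alpha/\beta}$ and hence both uniqueness and the interval description at once. Until you supply this step (including the path construction, which itself needs a short induction), the proof is not complete.
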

\begin{proof}
    The key idea is a straightening algorithm on standard extended tableaux, presented in \cite[Lemma 7.9]{NSvWVW2024}. This algorithm takes any standard extended tableau $T$  of a skew connected shape $\alpha/\beta$ and returns $S^\col_{\alpha/\beta}$ through a sequence of standard extended tableaux with fewer inversions. We recall it here for completeness.  

    \begin{enumerate}
        \item Take tableau $T$. Read the first column from bottom to top, and find the lowest element $i$ in that column that differs from $S^\col_{\alpha/\beta}$, say in cell $c$.
        \item Since $S^\col_{\alpha/\beta}$ has the first column labeled with $[\ell]$, where $\ell$ is the number of cells  in the first column of $\alpha/\beta$, $i-1$ is necessarily in some other column of $T$, so to the right. Now $i-1$ cannot be weakly above $i$, otherwise, because  $\alpha/\beta$ is connected,  the strict increase along rows and up columns would be violated.
        
        Hence $i-1$ appears below $i$, so the values $(i-1,i)$ form an inversion in $T$. Swap the positions of $i$ and $i-1$.
        \item Repeat step (2) with the same cell $c$ (now filled with $i-1$)  until its values in $T$ and $S^\col_{\alpha/\beta}$ agree.
        \item Repeat steps (2) and (3) with all entries in the first column.\\
        At this point, the values in the first column are $[\ell]$, so the values to the right are all larger.
        \item Repeat steps (1) -- (4) with the remaining columns, proceeding  left to right.
    \end{enumerate}
    The result is $S^\col_{\alpha/\beta}$.\\
    Each passage through step (2) decreases the number of inversions by $1$, whereas each passage through step (3) decreases the number of distinct entries between $T$ and $S^\col_{\alpha/\beta}$, ensuring that the algorithm terminates. 
    Hence this algorithm gives a chain in  $\SET(\alpha/\beta)$ (for $\alpha/\beta$ connected) from $S^\col_{\alpha/\beta}$ to $T$, for any choice of   $T\ne S^\col_{\alpha/\beta}$, showing that $S^\col_{\alpha/\beta}$ is the unique minimal element of $\SET(\alpha/\beta)$.

    Since the top element of the skew Hecke poset is $S^{\row}_{\alpha/\beta}$, the stated equivalence follows.
The rank of the Hecke poset $\SET(\alpha/\beta)$ is thus given by
\[ \inv(S^{\row}_{\alpha/\beta}) -\inv(S^{\col}_{\alpha/\beta}) 
=  \binom{|\alpha/\beta|}{2}-\sum_{c \in \alpha/\beta} \mathrm{ls}(c). \qedhere\]
\end{proof}

A similar statement can be made for a more general class of skew tableaux $\alpha/\beta$ when the composition $\beta$ is a partition.  \Cref{ex:skew-not-conn-not-disjoint-sum} below illustrates the structure of the skew shape.

\begin{example}\label{ex:skew-not-conn-not-disjoint-sum}  Let $\alpha=(5, 6, 5, 3, 2)$ and let $\beta$ be the partition $(4,3,3,2)$.  Then 
\[\alpha/\beta=
\ytableausetup{smalltableaux, boxsize=1.1em} \begin{ytableau}
        \ &\ \\ \none &\none & \ \\ \none &\none &\none &\ &\ \\
        \none &\none &\none &\ &\ &\ \\
         \none &\none &\none &\none \ &\ \\
    \end{ytableau}.\]
\end{example}

\begin{corollary}\label{cor:beta-ptn-SET-min} Let  $\alpha/\beta$  be a \emph{reduced} skew diagram such that $\beta$ is a partition. Then $\SET(\alpha/\beta)$ has a unique minimal element, $S^{\col}_{\alpha/\beta}$.  
Equivalently, the skew extended Hecke poset $\SET(\alpha/\beta)$ coincides with the interval $[S^\col_{\alpha/\beta}, S^\row_{\alpha/\beta}]$, and its   rank is $\binom{|\alpha/\beta|}{2}-\sum_{c \in \alpha/\beta} \mathrm{ls}(c)  $.
\end{corollary}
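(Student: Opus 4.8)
The plan is to re-run the straightening algorithm from the proof of \Cref{thm:min_element_connected} essentially verbatim, and to show that its single use of connectedness can be replaced by an argument that exploits only the hypothesis that $\beta$ is a partition. Recall that the algorithm processes the columns of a tableau $T\in\SET(\alpha/\beta)$ from left to right; having already matched columns $1,\dots,j-1$ to those of $S^\col_{\alpha/\beta}$, it locates the lowest cell $c$, say in row $r$ of column $j$, at which $T$ and $S^\col_{\alpha/\beta}$ disagree, and lets $i$ be the entry of $c$ in $T$. The whole argument reduces to one claim: the entry $i-1$ lies in a row strictly below $r$ (and hence, being in a column different from that of $c$, forms a removable inversion with $i$). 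Granting this claim, swapping $i$ and $i-1$ realises a downward cover in $\SET(\alpha/\beta)$, each such swap removes exactly one inversion, and iterating produces a saturated descending chain from $T$ to $S^\col_{\alpha/\beta}$; since this works for every $T$, the tableau $S^\col_{\alpha/\beta}$ is the unique minimum and the poset coincides with the interval $[S^\col_{\alpha/\beta}, S^\row_{\alpha/\beta}]$.

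First I would establish, exactly as in \Cref{thm:min_element_connected}, that $i$ strictly exceeds the $S^\col_{\alpha/\beta}$-value $v_0$ of $c$, so that $i-1\ge v_0$ is too large to occur in columns $1,\dots,j-1$ or below $c$ in column $j$, and too small to occur above $c$ in column $j$; hence $i-1$ sits in some column $j'>j$, in a row $r'$. The crux is to rule out $r'\ge r$, and this is where the partition hypothesis replaces connectedness. If $r'=r$, then $i-1$ and $i$ lie in the same row with $i-1$ strictly to the right, contradicting row-strictness. If $r'>r$, then because $\beta$ is a partition we have $\beta_{r'}\le\beta_r<j$, while $j<j'\le\alpha_{r'}$; together these inequalities force the cell in row $r'$ and column $j$ to belong to $\alpha/\beta$. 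Its entry exceeds $i$, since it lies above $c$ in the upward-increasing column $j$, yet it lies to the left of the entry $i-1$ in the same row $r'$, again contradicting row-strictness. Thus $r'<r$, proving the claim.

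The main obstacle is precisely this geometric claim, and in particular the observation that the partition shape of $\beta$ forces the cell $(r',j)$ into the diagram whenever $(r',j')$ is present with $j<j'$ and $r'>r$; this is the sole replacement for the connectedness input and is the only genuinely new idea required. The remaining verifications are routine: that the swap of $i-1$ and $i$ lands back in $\SET(\alpha/\beta)$ and realises a cover relation follows from the well-definedness of the $\rdI$-action on $\SET(\alpha/\beta)$, and may also be checked directly, since $c$ and the cell of $i-1$ share neither a row nor a column. Finally, the rank statement is immediate: with unique minimum $S^\col_{\alpha/\beta}$ and maximum $S^\row_{\alpha/\beta}$, the rank equals $\inv(S^\row_{\alpha/\beta})-\inv(S^\col_{\alpha/\beta})=\binom{|\alpha/\beta|}{2}-\sum_{c\in\alpha/\beta}\mathrm{ls}(c)$, exactly as in \Cref{thm:min_element_connected}.
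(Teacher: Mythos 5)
Your proposal is correct and follows essentially the same route as the paper: both re-run the straightening algorithm of \Cref{thm:min_element_connected} and observe that the only step using connectedness (that $i-1$ must lie strictly below $i$) survives when $\beta$ is a partition. Your coordinate argument --- that $\beta_{r'}\le\beta_r<j<j'\le\alpha_{r'}$ forces the cell $(r',j)$ into the diagram, yielding a row-strictness contradiction --- is just a rigorous unpacking of the paper's ``key observation'' that the connected components lie northwest to southeast along the boundary of $\beta$.
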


\begin{proof}  If $\alpha/\beta$ is connected, this follows from \Cref{thm:min_element_connected}. 

Otherwise, we record the following key observation: In the special case when $\beta$ is a partition, the connected components of $\alpha/\beta$ must lie  northwest to southeast along the boundary of $\beta$.  See \Cref{ex:skew-not-conn-not-disjoint-sum}.

This observation guarantees that the straightening algorithm used in the proof of  \Cref{thm:min_element_connected} can also be applied here, even though $\alpha/\beta$ is not connected. Step 2 is the crucial step to verify:  with $i$ defined as in Step 1, all cells with entries less than $i$ and to the right of entry $i$,  must lie in  a row below the row of $i$, 
because of the key observation above and the increase in  rows (left to right) and  columns (bottom to top). In particular $i-1$ lies in a row below the cell containing $i$.

The remaining statements now follow as in the proof of \Cref{thm:min_element_connected}.
\end{proof}

\begin{example}\label{ex:misc-examples}
We record some pertinent examples here.

\begin{enumerate}
\item Let $\alpha=(2,3)$, $\beta=(1,2)$.   Then  $\alpha/\beta$ is not connected, and $S^\col_{\alpha/\beta}$ is not the minimal element.  
We have 
    \[\SET(\alpha/\beta)=\left\{T=\ytableausetup{smalltableaux, boxsize=1.1em} \begin{ytableau}
        \none&1\\2&\none
    \end{ytableau},  \ 
    S^\col_{(2,3)/(1,2)} =S^\text{row}_{(2,3)/(1,2)}=\begin{ytableau}
        \none&2\\1&\none
    \end{ytableau}\right\}.\]
    Here $\pi_1(T)=S^\col_{(2,3)/(1,2)}$ in the skew Hecke poset, 
    showing that  the unique minimal element of $\SET(\alpha/\beta)$ is $T$, and not  $S^\col_{(2,3)/(1,2)}$. 
    This example also shows that if $\SET(\alpha/\beta)$ has a unique minimal element, $\alpha/\beta$ need not be connected.
   
\item    Now consider $\alpha=(3,2)$ and $\beta=(2,1).$
    This example shows that the  converse of \Cref{thm:min_element_connected} 
      is false.  Even if $S^\col_{\alpha/\beta}$ is  the unique minimal element of $\SET(\alpha/\beta)$, $\alpha/\beta$ need not be  connected.  Here 
    $$\SET(\alpha/\beta)=\left\{S^\col_{(3,2)/(2,1)}=\begin{ytableau}
        1&\none\\\none &2
    \end{ytableau}, \quad S^\row_{(3,2)/(2,1)}=\begin{ytableau}
        2&\none\\\none &1
    \end{ytableau}\right\},$$
   with minimal element $S^\col_{(3,2)/(2,1)}$ and maximal element $S^\row_{(3,2)/(2,1)}$. In this case $S^\col_{\alpha/\beta}$ is the unique minimal element of $\SET{(\alpha/\beta)}$, and $\alpha/\beta$ is not connected. 
\end{enumerate}
\end{example}

\section{Disconnected $\alpha/\beta$}\label{sec:disconnected-skew}
\subsection{Disjoint sums} 
\phantom{blank}

We now focus on skew shapes that are disconnected. In the special case when $\alpha/\beta$ is a disjoint sum of components, we have the following. 

\begin{proposition}\label{prop:disj-sum}
Assume that $\alpha/\beta$ is a disjoint sum of   skew shapes 
$(\alpha^{1}/\beta^{1}, \alpha^{2}/\beta^{2},\dots, \alpha^{k}/\beta^{k})$.
Further assume that 
$\alpha^{i}/\beta^{i}$ and $\alpha^{j}/\beta^{j}$ have no common row or column, and $\alpha^{i}/\beta^{i}$ is strictly above $\alpha^{i+1}/\beta^{i+1}$ for $1\le i \le k-1.$ Let $n_i=|\alpha^{i}/\beta^{i}|$, i.e. the number of cells in $\alpha^{i}/\beta^{i}$.
    If $\SET(\alpha^{i}/\beta^{i})$ has a unique minimal element $M_i$ for each $i$, then $\SET(\alpha/\beta)$ has a unique minimal element, obtained by replacing each entry $a$ of $M_i$ with  $a+ n_1+\dots + n_{i-1}.$  
\end{proposition}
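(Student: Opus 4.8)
The plan is to establish both halves of the statement using the local characterisation of minimality in \Cref{lem:key-min}: that the prescribed tableau $M$ is minimal, and that no other minimal element exists. Throughout I would set $s_0=0$ and $s_j=n_1+\cdots+n_j$, so $s_k=|\alpha/\beta|=:n$, and label the components $C_1,\dots,C_k$ from top to bottom, with $C_i=\alpha^i/\beta^i$ strictly above $C_{i+1}$. Since by hypothesis distinct components share neither a row nor a column, every cell of $C_a$ lies strictly above every cell of $C_b$ whenever $a<b$, and cells in distinct components never lie in a common column. By construction $M$ fills $C_j$ with the consecutive block $\{s_{j-1}+1,\dots,s_j\}$, placing $M_j$ there with all entries raised by $s_{j-1}$, so that the top component carries the smallest entries.

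First I would check that $M$ is a genuine element of $\SET(\alpha/\beta)$: the blocks partition $[n]$, and because no row or column meets two components, row- and column-strictness of $M$ reduce to that of each $M_j$. To see $M$ is minimal I apply \Cref{lem:key-min} to every pair $(i,i+1)$. If $i$ and $i+1$ lie in the same component $C_j$, their relative placement in $M$ matches that of $i-s_{j-1}$ and $i+1-s_{j-1}$ in $M_j$, so one of clauses (1)--(3) holds because $M_j$ is minimal. The only cross-component pairs occur at $i=s_j$ for $1\le j\le k-1$, where $i\in C_j$ and $i+1\in C_{j+1}$; as $C_j$ sits strictly above $C_{j+1}$, the cell of $i$ is higher than that of $i+1$, which is precisely clause (3). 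Hence $M$ is minimal.

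The crux is uniqueness. Let $N$ be any minimal element, and for $1\le t\le n$ let $c_t\in\{1,\dots,k\}$ be the index of the component of $N$ containing $t$. I claim the sequence $c_1,\dots,c_n$ is weakly increasing. Indeed, if $c_t\ne c_{t+1}$ then $t$ and $t+1$ lie in different components, hence share neither a row nor a column, so clauses (1) and (2) of \Cref{lem:key-min} are impossible; minimality of $N$ then forces clause (3), that $t$ is in a higher row than $t+1$, which means $c_t<c_{t+1}$. Since the value $j$ occurs among the $c_t$ exactly $n_j$ times (the number of cells of $C_j$), a weakly increasing sequence with these multiplicities must be $1^{n_1}2^{n_2}\cdots k^{n_k}$; thus $N$ assigns the block $\{s_{j-1}+1,\dots,s_j\}$ to $C_j$.

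To finish, I restrict $N$ to $C_j$ and lower each entry by $s_{j-1}$: the result is a standard extended tableau of shape $\alpha^j/\beta^j$, and for each internal pair $(t,t+1)$ with $s_{j-1}<t<s_j$ the relevant clause of \Cref{lem:key-min} for $N$ is identical to that for the restriction, so the restriction is a minimal element of $\SET(\alpha^j/\beta^j)$. By hypothesis this minimal element is unique and equals $M_j$, so $N$ and $M$ agree on every component, giving $N=M$. I expect the only delicate point---and the heart of the argument---to be the monotonicity of $(c_t)$: this is exactly where both hypotheses, no shared row \emph{and} no shared column, are needed to exclude clauses (1) and (2) and thereby pin down the block decomposition; everything else is bookkeeping.
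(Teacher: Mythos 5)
Your proof is correct, and it is organised rather differently from the paper's. The paper's argument is a short inversion count: filling the components with consecutive blocks from top to bottom creates no inversions between components, so the resulting tableau has the minimum possible number of inversions, and the disjointness of rows and columns guarantees it lies in $\SET(\alpha/\beta)$; the uniqueness of the minimal element is left largely implicit. You instead run everything through the local characterisation of \Cref{lem:key-min}: you verify directly that the prescribed tableau $M$ satisfies one of clauses (1)--(3) at every pair $(i,i+1)$, and---this is the part the paper does not spell out---you prove uniqueness by showing that for any minimal $N$ the component-index sequence $(c_t)$ must be weakly increasing (clauses (1) and (2) being unavailable across components forces clause (3), hence $c_t<c_{t+1}$ at every change), so $N$ necessarily assigns the block $\{s_{j-1}+1,\dots,s_j\}$ to the $j$th component and restricts to a minimal, hence the unique, element of each $\SET(\alpha^j/\beta^j)$. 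Your route buys a complete, self-contained uniqueness argument that does not rely on inversion statistics (which is safer here, since elsewhere in the paper minimal elements of $\SET(\alpha/\beta)$ can have different inversion numbers, so ``fewest inversions'' alone does not certify uniqueness); the paper's route is shorter and emphasises the rank/inversion picture. One small point worth making explicit in your write-up: within a single component, clause (2) of \Cref{lem:key-min} (``no cell between them'' in a column) reads the same in $\alpha/\beta$ as in $\alpha^j/\beta^j$ precisely because no other component meets that column---you use this when transferring minimality to the restriction, and it is exactly where the no-common-column hypothesis enters a second time.
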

\begin{proof}
    By filling the components of $\alpha/\beta$ as described, we create no new inversions between the components, hence keeping the number of inversions minimal.  Since the components do not share a row or column, each column of $\alpha/\beta$ is increasing and so the tableau is an element of $\SET(\alpha/\beta)$.
\end{proof}

An example of the minimal filling of a disjoint sum $\alpha/\beta$ is shown below.  Here for clarity we have not suppressed the cells of $\beta$.
\ytableausetup{smalltableaux, boxsize=1.1em}  
\[\begin{ytableau}
    \,&\,&\,&\,&\,&\,&\,&\,&\,&3&4\\
    \,&\,&\,&\,&\,&\,&\,&\,&1&2\\
    \,&\,&10&12&13\\
    \,&\,&9&11\\
    \,&\,&8\\
    \,&\,&7\\
    \,&5&6\\
    \,&\,&\,&\,&\,&15&19\\
    \,&\,&\,&\,&\,&\,&18\\
    \,&\,&\,&\,&\,&\,&17\\
    \,&\,&\,&\,&\,&14&16\\
\end{ytableau}\]
\subsection{Lobsters}
The right \textit{lobster} $\mathcal{L}^{c_1, c_2}_b$ associated to the triple  of positive integers $(b, c_1, c_2)$ is the skew diagram $\alpha/\beta$ with $\alpha= (b+1+c_2, b+1, b+1+c_1)$ and $\beta = (b+1,1,b+1)$.  Informally, a lobster is a skew diagram with three rows, such that  the top and bottom rows, of respective  lengths $c_1$ and $c_2$, are left-justified on the same vertical line, on which the middle row is right-justified. 
We refer to the middle row as the \emph{body}, and the other two rows as the \emph{claws}.  The claws of a right lobster lie to the \emph{right} of the body.  See Figure \ref{fig:lobster}.

The \emph{left} lobster associated  to the triple  of positive integers $(b, c_1, c_2)$ is defined similarly, with the distinction that now the claws lie to the \emph{left} of the body.

\begin{figure}[htb!]
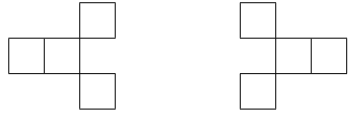

    \centering{
      \young(:::~,:~~,:::~)\qquad  \qquad \young(~,:~~,~)}
    \caption{Right and left lobsters, respectively.}
    \label{fig:lobster} 
\end{figure}

We remark that this class of skew diagrams generalises  the subposet $\SET(\alpha/\beta)$ with three minimal elements in \cite[Figure 1]{LOPSvWwM2025}.  

The total number of elements in the Hecke poset for the lobster $\mathcal{L}_b^{c_1,c_2}$ is 
$|\SET(\mathcal{L}_b^{c_1,c_2})|= \binom{b+c_1+c_2}{b} |\SET(c_2,c_1)|$.  In particular when $c_1=c_2=k$, $|\SET(c_2,c_1)|=C_k$, where $C_k=\frac{1}{k+1}\binom{2k}{k}$ is the $k$th Catalan number, since it is also the number of standard Young tableaux of shape $(k,k)$.  More generally, when $c_1\le c_2$, $|\SET(c_2,c_1)|$ is the number of standard Young tableaux of shape $(c_2, c_1)$ and hence by the hook length formula it equals $\frac{c_2-c_1+1}{c_2+1}\binom{c_1+c_2}{c_1} $.  
It follows that  
\[|\SET(\mathcal{L}_b^{c_1,c_2})|= \frac{c_2-c_1+1}{c_2+1}\binom{b+c_1+c_2}{b, c_1, c_2}, \text{ if $c_1\le c_2$}. \]

We begin with the following special case, for which the skew standard extended Hecke poset has a  well-known structure. \Cref{fig:2fistedlobster} depicts the Hecke poset $\SET(\mathcal{L}_3^{1,1})$. 
\begin{figure}[htb] \centering

\begin{center}{
		\scalebox{.8}{
\begin{tikzpicture}[
    node distance=2.5cm and 2cm,
    arrow/.style={-Latex, thick},
    tpart/.style={draw, inner sep=4pt, align=center, minimum height=2em, minimum width=2em},
    every node/.style={align=center}
]


 \node (m54) at (0,0) { $\begin{ytableau}
     \none&\none&\none&5\\
     1&2&3\\
     \none&\none&\none&4
 \end{ytableau}$ };
\node[below=.4cm of m54] {$M_{54}$};

 \node (m43) at (5,0) { $\begin{ytableau}
     \none&\none&\none&4\\
     1&2&5\\
     \none&\none&\none&3
 \end{ytableau}$ };
\node[below=.4cm of m43] {$M_{43}$};

\node (m32) at (10,0) { $\begin{ytableau}
     \none&\none&\none&3\\
     1&4&5\\
     \none&\none&\none&2
 \end{ytableau}$ };
\node[below=.4cm of m32] {$M_{32}$};

\node (m21) at (15,0) { $\begin{ytableau}
     \none&\none&\none&2\\
     3&4&5\\
     \none&\none&\none&1
 \end{ytableau}$ };
\node[below=.4cm of m21] {$M_{21}$};

\node (r2_1) at (2.5,2.6) { $\begin{ytableau}
     \none&\none&\none&5\\
     1&2&4\\
     \none&\none&\none&3
 \end{ytableau}$ };

\node (r2_2) at (7.5,2.6) { $\begin{ytableau}
     \none&\none&\none&4\\
     1&3&5\\
     \none&\none&\none&2
 \end{ytableau}$ };

\node (r2_3) at (12.5,2.6) { $\begin{ytableau}
     \none&\none&\none&3\\
     2&4&5\\
     \none&\none&\none&1
 \end{ytableau}$ };

\node (r3_1) at (5,5.2) { $\begin{ytableau}
     \none&\none&\none&5\\
     1&3&4\\
     \none&\none&\none&2
 \end{ytableau}$ };

\node (r3_2) at (10,5.2) { $\begin{ytableau}
     \none&\none&\none&4\\
     2&3&5\\
     \none&\none&\none&1
 \end{ytableau}$ };

\node (top) at (7.5,7.8) { $\begin{ytableau}
     \none&\none&\none&5\\
     2&3&4\\
     \none&\none&\none&1
 \end{ytableau}$ };


\draw[arrow] (m54.north) -- (r2_1.south) node[midway, left=3mm] {$\pi_3$};
\draw[arrow] (m43.north) -- (r2_1.south) node[midway, right=1mm] {$\pi_4$};
\draw[arrow] (m43.north) -- (r2_2.south) node[midway, left=3mm] {$\pi_2$};
\draw[arrow] (m32.north) -- (r2_2.south) node[midway, right=1mm] {$\pi_3$};
\draw[arrow] (m32.north) -- (r2_3.south) node[midway, left=3mm] {$\pi_1$};
\draw[arrow] (m21.north) -- (r2_3.south) node[midway, right=1mm] {$\pi_2$};

\draw[arrow] (r2_1.north) -- (r3_1.south) node[midway, left=3mm] {$\pi_2$};
\draw[arrow] (r2_2.north) -- (r3_1.south) node[midway, right=1mm] {$\pi_4$};
\draw[arrow] (r2_2.north) -- (r3_2.south) node[midway, left=3mm] {$\pi_1$};
\draw[arrow] (r2_3.north) -- (r3_2.south) node[midway, right=1mm] {$\pi_3$};

\draw[arrow] (r3_1.north) -- (top.south) node[midway, left=3mm] {$\pi_1$};
\draw[arrow] (r3_2.north) -- (top.south) node[midway, right=1mm] {$\pi_4$};

\end{tikzpicture}
}
}
\end{center}
\caption{The Hecke poset $\SET(\mathcal{L}_3^{1,1})$ with four minimal elements.}
\label{fig:2fistedlobster}
\end{figure}
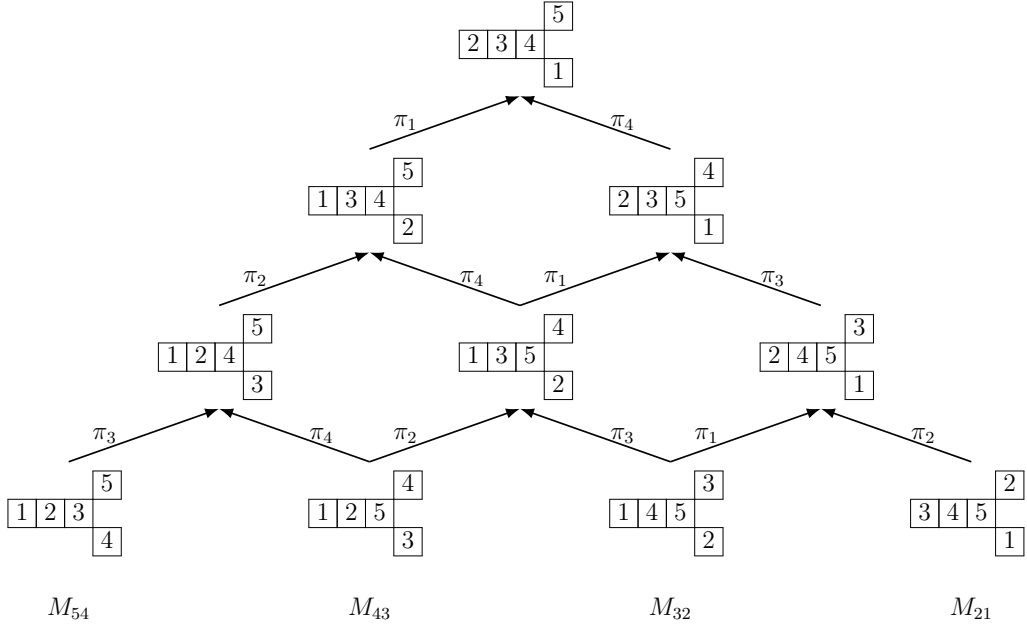
We  explain the structure of the Hecke poset $\SET(\mathcal{L}^{1,1}_{n-2})$ suggested in  \Cref{fig:2fistedlobster}, by showing that it is in fact  isomorphic to the root poset for the root system $A_{n-1}$   \cite[\S 4.6, Figure 4.4]{BjBrenti2005}. A depiction of $A_4$ is shown in Figure \ref{fig:rootposet}. 

In fact, the combinatorial structure of both posets is also related to the following well-known poset \cite[p. 260 and Fig. 3.12]{RPS-EC1-2012}.

Consider the poset  $\mathbb{N}^2$ with maximal element $(0,0)$ and cover relations defined as follows: 
 $(i,j)$ covers $(k,\ell)$ if and only if $k+\ell=i+j+1$ and either $i=k$ or $j=\ell$.
 Thus $(i,j)$ covers exactly two elements, $(i+1,j)$ and $(i, j+1)$.  This makes $\mathbb{N}^2$ an infinite graded poset; it is the dual of the poset described in \cite[p. 260]{RPS-EC1-2012}. Since it is customary to count ranks from the (possibly artificially appended) minimal element of a poset, we use coranks instead. Thus $(0,0)$ has corank 0, and corank $r$ in $\mathbb{N}^2$ consists of the set $\{(i,j): i+j=r\}$. 

\begin{proposition}\label{lobster-2-fisted}  For the right lobster  $\mathcal{L}^{1,1}_{n-2}$ with claws of size 1, the skew standard extended  Hecke poset is isomorphic to 
\begin{itemize}
    \item the root poset for the root system $A_{n-1}$, and also to 
    \item the poset 
 $\mathbb{N}^2$ 
 truncated at corank $n-2$.
\end{itemize}

 It has $n-1$ minimal elements $M_{i+1,i}$, $1\le i\le n-1$, where $M_{i+1,i}$ is the unique filling of the lobster such that  $i, i+1$ appear in the claws.  In particular, all the minimal elements   have the same number of inversions.  The  cardinality of $\SET( \mathcal{L}^{1,1}_{n-2})$ is $\binom{n}{2}$.
\end{proposition}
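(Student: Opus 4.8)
The plan is to parametrise the elements of $\SET(\mathcal{L}^{1,1}_{n-2})$ explicitly, read the covers directly off the $\rdI$-action, and then exhibit each claimed isomorphism as a change of coordinates verified on covers. For $\mathcal{L}^{1,1}_{n-2}$ we have $\alpha=(n,n-1,n)$ and $\beta=(n-1,1,n-1)$, so $\alpha/\beta$ is a \emph{body} of $n-2$ cells in the middle row together with two single \emph{claw} cells, one in the bottom row and one in the top row, both in the column immediately to the right of the body (the middle cell of that column being absent). Since the two claws form a single column and the body is a single row, row- and column-strictness force an extended tableau to be completely determined by its two claw entries: the smaller value $p$ sits in the bottom claw, the larger value $q$ in the top claw, and the remaining $n-2$ values fill the body in increasing order. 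Thus $T\mapsto(p,q)$ is a bijection from $\SET(\mathcal{L}^{1,1}_{n-2})$ onto $\{(p,q):1\le p<q\le n\}$, whence $|\SET(\mathcal{L}^{1,1}_{n-2})|=\binom{n}{2}$; this also follows from the cardinality formula preceding the proposition, since $|\SET(1,1)|=1$.

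Next I would compute the covers. By the $\rdI$-action (\Cref{sec:rs-action}), $\pi_i$ acts as the swap $s_i$ on the tableau with claws $(p,q)$ exactly when $i$ lies strictly above $i+1$ in a different column. A short case analysis on the rows of $i$ and $i+1$---a swap needs $i+1$ strictly below $i$, which excludes both lying in the body, and excludes $i$ in the top claw with $i+1$ in the bottom claw since $p<q$---leaves exactly two possibilities: $i+1=p$ in the bottom claw with $i=p-1$ in the body (requiring $p\ge 2$), and $i=q$ in the top claw with $i+1=q+1$ in the body (requiring $q\le n-1$). Performing the swaps gives the covering relations
\[(p,q)\ \lessdot\ (p-1,q)\ \ (p\ge 2),\qquad (p,q)\ \lessdot\ (p,q+1)\ \ (q\le n-1),\]
so moving up the poset either lowers the bottom-claw value or raises the top-claw value by one. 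In either swap the two interchanged consecutive integers reverse their order in the reading word while their order relative to every other entry is unchanged, so $\inv$ increases by exactly $1$; hence this subposet is graded by inversions, with unique maximal element $(1,n)=S^{\row}_{\mathcal{L}^{1,1}_{n-2}}$ of inversion number $\binom{n}{2}$.

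Each isomorphism is now a routine check on covers. The map $(p,q)\mapsto e_p-e_q$ sends the pairs bijectively to the positive roots of $A_{n-1}$, and the two covers above become the two ways of extending a root by a simple root, $e_p-e_q\mapsto e_{p-1}-e_q$ and $e_p-e_q\mapsto e_p-e_{q+1}$, which are precisely the covers of the root poset. The map $(p,q)\mapsto(p-1,\,n-q)$ sends the pairs bijectively to $\{(i,j)\in\mathbb{N}^2:i+j\le n-2\}$ (since $p<q$ is equivalent to $(p-1)+(n-q)\le n-2$), and the two covers become $(i,j)\mapsto(i-1,j)$ and $(i,j)\mapsto(i,j-1)$, the upward covers of $\mathbb{N}^2$; so the image is $\mathbb{N}^2$ truncated at corank $n-2$.

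Finally I would read off the remaining claims. A pair is minimal iff it has no downward cover, i.e.\ iff neither $(p+1,q)$ nor $(p,q-1)$ is a valid pair, which happens exactly when $q=p+1$; these are the $n-1$ tableaux $M_{i+1,i}$ (with $p=i$, $1\le i\le n-1$) whose claws hold the consecutive values $i,i+1$, matching the simple roots $\alpha_i=e_i-e_{i+1}$ and the bottom corank $i+j=n-2$. Since each cover raises $\inv$ by $1$ and the top has inversion number $\binom{n}{2}$, a pair $(p,q)$ has $\inv=\binom{n}{2}-(p-1)-(n-q)$, so every minimal element $(i,i+1)$ has $\inv=\binom{n}{2}-(n-2)$ independent of $i$, and all minimal elements share the same number of inversions. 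The main obstacle is the cover computation: the case analysis must be genuinely exhaustive and must rule out any further $\pi_i$-swaps (within the body and between the claws), after which the statement reduces to verifying two coordinate changes on covers.
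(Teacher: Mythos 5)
Your proposal is correct and follows essentially the same route as the paper: parametrise each tableau by its two claw entries, read the two upward covers off the $\rdI$-action (lower the bottom claw or raise the top claw by one), and verify the two isomorphisms as explicit coordinate changes on covers, with minimality occurring exactly when the claw entries are consecutive. The only cosmetic differences are notational (the paper writes $M_{j,i}$ for your $(p,q)$ with coordinates reversed in the $\mathbb{N}^2$ map) and that you obtain the cardinality $\binom{n}{2}$ directly from the bijection rather than by summing rank sizes.
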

\begin{proof} The top element of the 0-Hecke poset $\SET(\mathcal{L}^{1,1}_{n-2})$ is  
$ S^{\row}_{\mathcal{L}^{1,1}_{n-2}}= \setlength{\cellsize}{3.8ex}{\tableau{& & & &{\scr n }\\{\scr 2} &{\scr 3 }& \ldots &{\scr n-1}\\ & & & &{\scr 1}}}.$ 

An arbitrary standard extended tableau  $M_{j,i}$ of shape $\mathcal{L}^{1,1}_{n-2}$   is uniquely determined by the pair of entries $i<j$ in the claws, as depicted below, if 
$2\le i<j\le n-1$. 

\[M_{j,i}=  \setlength{\cellsize}{3.8ex}{\tableau{& & & & & & & & & & {\scr j}\\{\scr 1} &{\scr 2 }& \ldots &\boldsymbol{\scr{i-1}} &\boldsymbol{\scr{ i+1}} &\ldots &\boldsymbol{\scr j-1} &\boldsymbol {\scr  j+1}&\ldots &{\scr  n-1}\\ & & & & & & & & & &{\scr i}}}\]
If $i=1$ or $j=n$, the body is filled with the entries $\{2,\ldots,n\}\setminus\{j\}$ and $\{1,\ldots,n-1\}\setminus\{i\}$ respectively.

The definition of the $\rdI$-action now implies that, as long as $j-i\ge 2$, 
we have 
\[M_{j,i}=\pi_i(M_{j,i+1}) = \pi_{j-1}(M_{j-1,i}),
\]
and hence $M_{j,i}$ covers exactly two elements, namely $M_{j,i+1}$ and $M_{j-1,i}$.

Moreover, the elements covering $M_{j,i}$  are $\pi_{i-1}(M_{j,i})=M_{j,i-1}$ if $i\ge 2$ 
and $\pi_j(M_{j,i})=M_{j+1,i}$ if $j\le n-1$.  In particular, $M_{n, i}$, $i\ge 2$ and 
$M_{j,1}$, $j\le n-1$, are each covered by exactly one element, and $M_{j,i}$ is covered by exactly two elements if $j\le n-1$ and $i\ge 2$. 

From Lemma~\ref{lem:key-min}, it follows that the minimal elements are the $M_{j,i}$ with $j-i=1$, i.e., the $n-1$ elements $M_{i+1,i}$. 

One also sees that the Hecke  poset $\SET(\mathcal{L}^{1,1}_{n-2})$ is graded, with corank $n-2$. The elements at corank $r$ (corank 0 is the top element $S^{\row}_{\alpha/\beta}$ where $\alpha/\beta = \mathcal{L}^{1,1}_{n-2}$) are precisely the $r+1$ tableaux $M_{j,i}$ with $j-i=n-r-1$. 
It follows that all the minimal elements are at the same corank, and therefore have the same number of inversions.
  
The description of the $\rdI$-action above shows that the  map  $M_{j,i}\mapsto e_i-e_j $  gives the poset isomorphism with the root poset of $A_{n-1}$.  See \Cref{fig:rootposet}.
\begin{figure}
\begin{center}
{{\scalebox{0.7}{$$\xymatrix @-1.2pc {
& && M_{51} & && \\
&& M_{52} \ar@{-}[ur] & & M_{41} \ar@{-}[ul] && \\
&M_{53} \ar@{-}[ur] & & M_{42} \ar@{-}[ur] \ar@{-}[ul] & & M_{31} \ar@{-}[ul] &\\
M_{54} \ar@{-}[ur] & & M_{43} \ar@{-}[ur] \ar@{-}[ul] & & M_{32}  \ar@{-}[ul] \ar@{-}[ur]&&  M_{21} \ar@{-}[ul]}$$}}}
\qquad
{{\scalebox{0.7}{$$\xymatrix @-1.2pc {
& && e_1-e_5 & && \\
&& e_2-e_5 \ar@{-}[ur] & & e_1-e_4 \ar@{-}[ul] && \\
&e_3 - e_5 \ar@{-}[ur] & & e_2-e_4 \ar@{-}[ur] \ar@{-}[ul] & & e_1-e_3 \ar@{-}[ul] &\\
e_4 - e_5 \ar@{-}[ur] & & e_3-e_4 \ar@{-}[ur] \ar@{-}[ul] & & e_2-e_3  \ar@{-}[ul] \ar@{-}[ur]&&  e_1-e_2 \ar@{-}[ul]}$$}}}
\end{center}
\caption{Isomorphism of $\SET(\mathcal{L}^{1,1}_{n-2})$ with root poset $A_{n-1}$, for $n=5$.}
\label{fig:rootposet}
\end{figure}
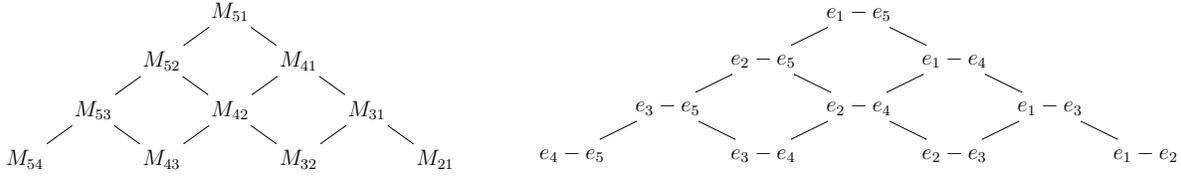

Finally consider the dual of the  poset $\mathbb{N}^2$.  The  map 
\[ M_{j,i}\mapsto (n-j, i-1), 1\le i<j\le n
\]
explicitly establishes the poset isomorphism in the second statement of the theorem.

The statement about the total number of standard extended tableaux follows by adding up elements at each rank, $\sum_{r=0}^{n-2} (r+1)=\binom{n}{2}$.
\end{proof}

Our next observation shows that we need only consider right lobsters.
\begin{lemma}\label{lem:RtoL-lobsters} 
Let $\mathcal{L}^{c_1, c_2}_b$ be the right lobster associated to the triple $(b, c_1, c_2)$.  Let $R$ be the map that sends each entry $i$ of a tableau $T$ in $\SET(\mathcal{L}^{c_1, c_2}_b)$  to $|\alpha/\beta| +1-i$,  and rotates $T$ 
by 180 degrees.  Then $R$ is a poset isomorphism from the skew $\SET$ Hecke poset  of the right lobster $\mathcal{L}^{c_1, c_2}_b$ associated to  the triple $(b, c_1, c_2)$, to the skew $\SET$ Hecke poset of the left lobster associated to the triple $(b, c_2, c_1)$.
\end{lemma}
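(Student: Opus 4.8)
The plan is to verify directly that $R$ is a well-defined bijection carrying cover relations to cover relations in both directions. Throughout write $N=|\alpha/\beta|=b+c_1+c_2$ for the number of cells, and let $\rho$ denote the $180^\circ$ rotation of cell positions, so that $R(T)$ is the tableau whose value in cell $\rho(p)$ equals $N+1-T(p)$. Note that $\rho$ acts on positions and the complementation $v\mapsto N+1-v$ acts on values, so the two operations commute.

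First I would check that $R$ is well-defined and bijective. The rotation $\rho$ sends the right lobster shape $\mathcal{L}^{c_1,c_2}_b$ to the left lobster shape associated to $(b,c_2,c_1)$: the body remains the middle row, but its claws, which lay to the right, now lie to the left, while the top and bottom claws (of lengths $c_1$ and $c_2$) are interchanged. For the filling, a row read left-to-right in increasing order becomes, after rotation, increasing read right-to-left, i.e. decreasing left-to-right; the complement $v\mapsto N+1-v$ reverses this back to increasing. The same cancellation of two order reversals shows that strict increase up columns is preserved. Hence $R(T)$ is again a standard extended tableau, now of the left lobster shape. Since $R$ is built from two commuting involutions, the analogous map from the left lobster to the right lobster is a two-sided inverse, so $R$ is a bijection.

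The heart of the argument is the compatibility of $R$ with the $\rdI$-action, which I would phrase as the intertwining identity
\[
R\circ s_i = s_{N-i}\circ R \qquad (1\le i\le N-1),
\]
verified by checking the three cases $v=i$, $v=i+1$, and $v\notin\{i,i+1\}$ of the elementary value identity $N+1-s_i(v)=s_{N-i}(N+1-v)$. Combined with the position analysis this yields the crucial local fact: if $i$ lies strictly above $i+1$ in $T$, then after complementing and rotating, the value $N-i$ lies strictly above $(N-i)+1$ in $R(T)$. In other words, the configuration generating an upward cover via $\pi_i$ in $T$ is sent by $R$ to the configuration generating an upward cover via $\pi_{N-i}$ in $R(T)$, and moreover $\pi_{N-i}(R(T))=s_{N-i}(R(T))=R(s_i(T))$ by the intertwining identity. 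Thus $T\lessdot s_i(T)$ if and only if $R(T)\lessdot s_{N-i}(R(T))$; since (by the definition of the $\rdI$-action in \Cref{sec:rs-action}) the cover relations of these finite posets are exactly the nontrivial $\pi_i$-swaps, $R$ carries covers to covers in the same upward direction, and the same reasoning applied to $R^{-1}$ shows it reflects them. Hence $R$ is a poset isomorphism onto the left lobster poset.

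The one point requiring genuine care — and the step I expect to be the main obstacle — is pinning down the \emph{direction} of the isomorphism, since both operations comprising $R$ are individually order-reversing: complementing values alone reverses the poset, and the vertical flip in $\rho$ alone reverses the above/below relation that defines covers. The content of the argument above is precisely that these two reversals cancel, leaving $R$ order-preserving rather than order-reversing; the explicit tracking of which consecutive pair $(N-i,(N-i)+1)$ arises, and on which side it places the smaller value, is what makes this cancellation visible. Everything else is routine bookkeeping on the lobster shape.
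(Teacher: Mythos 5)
Your proof is correct and follows essentially the same route as the paper's: verify that the double reversal (value complementation plus $180^\circ$ rotation) preserves the row and column increase conditions and maps the right-lobster shape to the left-lobster shape with claws interchanged, then check that a nontrivial $\pi_i$-swap in $T$ corresponds to a nontrivial $\pi_{N-i}$-swap in $R(T)$ (the paper writes this index as $|\alpha/\beta|+1-(i+1)$, which is the same $N-i$). Your packaging of the key step as the intertwining identity $R\circ s_i = s_{N-i}\circ R$, and your explicit remark on bijectivity via the inverse map, are slightly more systematic than the paper's case-by-case position check, but the content is identical.
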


\begin{proof} First, as an example, 
$R$ maps a tableau whose shape is the  right lobster $\mathcal{L}^{2, 3}_2$, to a tableau whose shape is the left lobster associated to the triple $(2,3,2)$, as follows:
\[
\begin{ytableau}
    \none&\none&\none&3&7\\
    \none&2&6\\
    \none&\none&\none&1&4&5
\end{ytableau}\quad
\raisebox{-10pt}{$\overset{R}{\longmapsto}$}\quad
\begin{ytableau}
    \none&3&4&7\\
    \none&\none&\none&\none&2&6\\
    \none&\none&1&5
\end{ytableau}
\]


Note that if we have $j<j+m$ in a row, then by replacing each entry $i$ by  $|\alpha/\beta| +1-i$ and, rotating by 180 degrees, we get
$$|\alpha/\beta| +1 - j-m <|\alpha/\beta| +1 - j,$$ so after applying $R$ our rows still increase from left to right. Similarly, if we have $j<j+m$ in a column, then by replacing each entry $i$ by  $|\alpha/\beta| +1-i$ and, rotating by 180 degrees, we again get $$|\alpha/\beta| +1 - j-m <|\alpha/\beta| +1 - j,$$ so after applying $R$ our columns still increase from bottom to top. 

It remains to check that, if $\pi _i(T_1) = T_2$ and $T_1\neq T_2$, then
$$\pi_{|\alpha/\beta| +1 - (i+1)}(R(T_1))= R(T_2).$$ 

If $\pi _i(T_1) = T_2$, then this means 
\begin{multicols}{2}
in $T_1$
\begin{itemize}
\item $i$ is above $i+1$, 
\item $i, i+1$ are not in the same column,
\end{itemize} and in $T_2$
\begin{itemize}
\item $i$ is below $i+1$,
\item $i, i+1$ are not in the same column,
\end{itemize}
\end{multicols}
 and all other entries are fixed. 
Observe that
\begin{multicols}{2}
in $R(T_1)$ we have that
\begin{itemize}
\item $|\alpha/\beta| +1 - (i+1)$ is above $|\alpha/\beta| +1 - i$,
\item $|\alpha/\beta| +1 - (i+1), |\alpha/\beta| +1 - i$ are not in the same column,
\end{itemize} and in $R(T_2)$
\begin{itemize}
\item $|\alpha/\beta| +1 - (i+1)$ is below $|\alpha/\beta| +1 - i$,
\item $|\alpha/\beta| +1 - (i+1), |\alpha/\beta| +1 - i$ are not in the same column,
\end{itemize}   
\end{multicols}
and all other entries are fixed. Thus
$$\pi_{|\alpha/\beta| +1 - (i+1)}(R(T_1))= R(T_2),$$ and we are done.
\end{proof}

In view of \Cref{lem:RtoL-lobsters}, henceforth we will consider only right lobsters,  referring to them simply as lobsters.

\begin{theorem}\label{thm:lobster-min-elts} 
    Let $\alpha/\beta$ be the lobster $\mathcal{L}_b^{c_1, c_2}$ with $c= \min(c_1,c_2)$. Then the minimal elements of $\SET(\alpha/\beta)$ are in bijection with binary words having $b$ occurrences of $B$ and $c$ occurrences of $C$. Therefore, $\SET(\alpha/\beta)$ has $\binom{b+c}{b}$ minimal elements, one of which is $S^{\col}_{\alpha/\beta}$. 
\end{theorem}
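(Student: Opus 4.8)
The plan is to translate minimality into a combinatorial condition on the sequence of row‑labels of the entries $1,2,\dots,n$ (where $n=b+c_1+c_2$), and then show this condition forces a rigid ``paired'' structure on the claws that is in bijection with binary words. Write $1,2,3$ for the bottom claw, the body, and the top claw respectively, and let $w=w_1w_2\cdots w_n$ record the region of each entry; since each row is filled in increasing order, $w$ together with the column of each entry determines the tableau. Because the body (columns $2,\dots,b+1$) shares no column with either claw, \Cref{lem:key-min} translates directly into: a standard extended tableau is minimal if and only if $w$ contains no factor $12$ and no factor $23$, and every factor $13$ (a bottom entry $i$ immediately followed in value by a top entry $i+1$) has $i$ and $i+1$ in the \emph{same} column. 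I would establish this translation first, checking each of the three strictly increasing region‑transitions $1\!\to\!2,\,1\!\to\!3,\,2\!\to\!3$ against \Cref{lem:key-min}.

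Next I would dispose of the $d=|c_1-c_2|$ \emph{non‑shared} claw cells, namely the cells of the longer claw lying in columns with no partner in the other claw; here $c=\min(c_1,c_2)$ counts the shared columns. Assuming first $c_1\le c_2$, a non‑shared bottom entry cannot be succeeded in value by a body entry (factor $12$) nor by a top entry (no top cell lies in its column, so a factor $13$ cannot be aligned); hence its successor is again a bottom entry, or it equals $n$. This forces the non‑shared bottom cells to carry exactly the largest values $n-d+1,\dots,n$ in increasing order, sitting at the end of $w$. The case $c_1>c_2$ is handled symmetrically with top and bottom interchanged: the non‑shared top cells are forced to carry a consecutive run placed immediately after the top entry of the last shared column. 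In either case the non‑shared cells contribute no freedom, and after deleting them and standardising we are reduced to the balanced situation $c_1=c_2=c$.

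For the balanced case I would introduce the height $h_j=\#\{\text{bottom entries}\le j\}-\#\{\text{top entries}\le j\}$. The column condition $x_k<y_k$ in each shared column is equivalent to $h_j\ge 0$ for all $j$ with $h_n=0$, so the $1$'s and $3$'s of $w$ form a ballot sequence, while the alignment condition says precisely that each factor $13$ occurs with $h=0\to1\to0$, at ground level. The crux is to show $h$ never exceeds $1$: if $h$ reached $2$, then from that point no top entry could ever be placed (any factor $13$ would be misaligned, since $h\neq1$) and no body entry could follow a bottom entry (factor $12$), so $h$ could never return to $0$, contradicting $h_n=0$. Thus $h\in\{0,1\}$ throughout, and the $1$'s and $3$'s pair off into adjacent aligned factors $13$. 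This pairing is the main obstacle and the heart of the argument.

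Finally I would read off the bijection. A balanced minimal element is exactly an interleaving of the $b$ body entries (symbol $B$) and the $c$ aligned pairs $13$ (symbol $C$), so minimal elements correspond to binary words with $b$ occurrences of $B$ and $c$ occurrences of $C$, of which there are $\binom{b+c}{b}$. I would verify the converse, that every such word yields a genuine minimal tableau: the region word it produces has only the junctions $22,21,32,31$ (never $12$ or $23$) and places each pair $13$ within a single column, so rows and columns increase and \Cref{lem:key-min} applies. Tracing the construction, the column‑superstandard filling $S^{\col}_{\alpha/\beta}$ has region word $2^b(13)^c$ followed by the forced non‑shared cells, which is the binary word $B^bC^c$; hence $S^{\col}_{\alpha/\beta}$ is one of the $\binom{b+c}{b}$ minimal elements, completing the proof.
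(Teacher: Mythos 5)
Your proposal is correct, and it reaches the same structural conclusion as the paper (each shared claw column holds a consecutive pair, the surplus claw cells are forced, and the only freedom is the interleaving of body cells with claw columns), but by a genuinely different route. The paper first proves \Cref{lem:columns_min_elements_in_lobsters} (in a minimal element every column is filled with an interval of $\mathbb{N}$) via a leftmost-bad-column argument, then exhibits the explicit filling algorithm $\Psi$, checks with \Cref{lem:key-min} that its outputs are minimal, and argues separately that any tableau not produced by the algorithm fails minimality because of where the largest labels sit. You instead encode a tableau by its region word, translate \Cref{lem:key-min} into the forbidden factors $12$, $23$, and misaligned $13$, force the $|c_1-c_2|$ non-shared claw cells to carry a consecutive run, and then prove completeness in the balanced case with the height function $h$: the ballot condition $h\ge 0$ comes from column-strictness, the alignment condition pins factors $13$ to $h=0\to1\to0$, and the observation that $h\ge 2$ traps all subsequent letters in the bottom claw shows $h\le 1$, so the claws pair into adjacent aligned blocks. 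Your ballot/lattice-path argument effectively replaces the paper's column-interval lemma and its case analysis, and is arguably more systematic on the completeness side; the paper's version is more constructive and hands you the $BC$-word $\Psi^{-1}(T)$ directly, which is then reused in \Cref{thm:min_elts}. Two small points to tighten if you write this up: the region word alone already determines the tableau (each row is filled in increasing order, so the column data is redundant), and in the step ``from that point no top entry could ever be placed'' you should make explicit the induction that once $h\ge 2$ the current letter is a $1$, so the next letter is again forced to be a $1$ (ruling out a later $33$ descent), which is what actually prevents $h$ from returning to $0$.
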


Before proceeding to the proof, we require one more lemma.

\begin{lemma}\label{lem:columns_min_elements_in_lobsters}
    Let $\alpha/\beta$  be a lobster. Let $T$ be a minimal element of $\SET(\alpha/\beta)$. Then the entries of the cells in any given column of $T$ form an interval of $\mathbb{N}$.
\end{lemma}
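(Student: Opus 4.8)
The plan is to reduce the statement to a claim about the two-cell columns, and then prove that claim by induction on the columns from left to right, driven by the minimality criterion of \Cref{lem:key-min}.

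First I would record the column structure of the lobster $\mathcal{L}_b^{c_1,c_2}$. Writing the rows from the bottom (row $1$) to the top (row $3$), the body occupies row $2$ in columns $2,\dots,b+1$, while the claws occupy row $1$ in columns $b+2,\dots,b+1+c_2$ and row $3$ in columns $b+2,\dots,b+1+c_1$. Hence no column meets all three rows: the columns $2,\dots,b+1$ and the columns beyond $b+1+\min(c_1,c_2)$ each contain a single cell, so their entry sets are trivially intervals. The only nontrivial columns are the $c=\min(c_1,c_2)$ columns $b+2,\dots,b+1+c$, each of which contains exactly a bottom cell in row $1$ and a top cell in row $3$, with the intervening row-$2$ cell absent. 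For such a column $K$ I write $x_K$ for its (row $1$) bottom entry and $y_K$ for its (row $3$) top entry; since columns increase upward, $x_K<y_K$, and the lemma amounts to proving $y_K=x_K+1$.

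The engine of the proof is to apply \Cref{lem:key-min} to the index $i=y_K-1$. Because $y_K$ sits in the top row, option (3) (that $i$ lie strictly above $i+1$) is impossible; option (2) forces $y_K-1$ to be the cell directly below $y_K$ in column $K$, which, as row $2$ is empty there, is exactly $x_K$, giving $y_K=x_K+1$ via the ``gap'' form of (2); and option (1) forces $y_K-1$ to be the cell immediately to the left of $y_K$ in row $3$, namely $y_{K-1}$. For the leftmost claw column $K=b+2$ there is no cell to the left in row $3$, so option (1) is unavailable and we are forced into $y_K=x_K+1$; this is the base case. For the inductive step on a column $K>b+2$ (for which column $K-1$ is again a two-cell column), I assume $y_{K-1}=x_{K-1}+1$. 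In the remaining case $y_K-1=y_{K-1}$ I get $y_K=y_{K-1}+1=x_{K-1}+2$; combining this with the strict left-to-right increase $x_{K-1}<x_K$ in row $1$ and with $x_K<y_K$ sandwiches $x_K$ between $x_{K-1}$ and $x_{K-1}+2$, forcing $x_K=x_{K-1}+1$, whence $y_K=x_K+1$ once more. This completes the induction and the proof.

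I expect the only real obstacle to be the careful bookkeeping in the application of \Cref{lem:key-min}: one must verify that for $i=y_K-1$ exactly the two possibilities ``$y_K-1$ lies directly below $y_K$'' and ``$y_K-1$ lies directly left of $y_K$'' survive, and that the empty row-$2$ cell is correctly accommodated by the gap form of option (2). Everything else is the routine sandwiching of consecutive row entries.
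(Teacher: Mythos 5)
Your proof is correct and is essentially the paper's argument recast as a direct induction: the paper proves the contrapositive by taking the leftmost non-interval claw column and applying \Cref{lem:key-min} to the pair $(j-1,j)$ where $j$ is its top entry, which is exactly your case analysis for $i=y_K-1$, with your induction hypothesis playing the role of ``every column to the left is an interval.'' Your sandwich $x_{K-1}<x_K<x_{K-1}+2$ is the same computation the paper phrases as ``$j-2$ cannot be to the left of $i$, as rows are increasing.''
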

\begin{proof}
    We prove the contrapositive. Suppose that there is a column in the claws of $T$ with entries $i$ and $j$, with $j>i+1$. Consider the leftmost such column. Then $j-1$ must be in some other column. It cannot be immediately to the left of $j$, which is in the leftmost column that is not an interval, and $j-2$ cannot be to the left of $i$, as rows are increasing. Also, $j-1$ cannot be to the right of $j$ and in the same row. Thus $j-1$ has to be lower than $j$ and not in the same column. Using Lemma \ref{lem:key-min}, this shows that $T$ is not a minimal of $\SET(\alpha/\beta)$.
\end{proof}
\begin{proof}[Proof of \Cref{thm:lobster-min-elts}]
    Let $T$ be a minimal element in $\SET(\alpha/\beta)$.
    By Lemma \ref{lem:columns_min_elements_in_lobsters}, the entries in each column form an interval of $\mathbb{N}$. Describing $T$ amounts to deciding in what order to fill the columns. We note that, to be an admissible filling of $\SET(\alpha/\beta)$, two columns that have a row in common must be filled from left to right. In a lobster, we therefore have two classes of columns, one corresponding to the body, and one corresponding to the claws.

    The following algorithm to fill the columns of $T$ defines a bijection $\Psi$ from the set of binary words on the alphabet $\{B,C\}$ with $b$ occurrences of $B$ and $c$ occurrences of $C$ to minimal elements of $\SET(\alpha/\beta)=\SET(\mathcal{L}_b^{c_1, c_2})$, with $c= \min(c_1,c_2)$.

    Consider a word $w$ on the binary alphabet $\{B,C\}$, with $b$ occurrences of $B$ and with $c$ occurrences of $C$. We fill the columns of $T = \Psi(w)$ according to the following rules, depicted in Figure \ref{fig:lobster-filling-algorithm}:
    \begin{enumerate}
        \item If $c_1\leq c_2$, for each letter in $w$, write the numbers $\{1,\ldots,n\}$ in order in the cells of $T$, filling the leftmost unfilled column of the body for each occurrence of $B$ and the leftmost unfilled column of the claws for each occurrence of $C$. Then fill the remaining $c_2-c_1$  cells with the numbers $n-(c_2-c_1)+1, \ldots, n$, from left to right.

        To show that these are all minimal elements of $\SET(\alpha/\beta)$, we use Lemma \ref{lem:key-min} and check all the pairs of cells labeled by $i$  and $i+1$. From our construction, they can be such that
        \begin{itemize}
            \item $i$ and $i+1$ are consecutive elements in a row, if both cells are in the body, or if both cells are in the bottom claw with nothing above.
            \item $i$ and $i+1$ are consecutive elements in a column, if both cells are in the claws.
            \item $i$ is at the top of a column. If $i$ is in the upper claw and $i+1$ is not in the same row, then $i+1$ is below. If $i$ is in the body and $i+1$ is not, the filling algorithm forces $i+1$ to be in the lower claw, so $i+1$ is necessarily below $i$.
        \end{itemize}
        In all these cases, the conditions of \Cref{lem:key-min} are satisfied for $T$ to be a minimal element.
        
        Also, we claim that any standard extended tableau that is not filled in this way is not a minimal element of $\SET(\alpha/\beta)$. By Lemma \ref{lem:columns_min_elements_in_lobsters}, each column must  contain an interval of $\mathbb{N}$ to be a minimal element. Hence, the only standard extended tableaux that could be minimal elements and that are not given by the filling algorithms above are those in which at least one of the $n-b-2c$ largest labels is not in  the lower claw. Consider such a tableau and let $i$ be the smallest label that is in one of the last $n-b-2c$ cells of the lower claw and such that $i+1$ is not. Then $i+1$ is in a row above $i$ and not directly above it. By  \Cref{lem:key-min}, such a tableau is not a minimal element of $\SET(\alpha/\beta)$. The above argument shows that $\Psi$ is a bijection from the set of binary words on the alphabet $\{B,C\}$ with $b$ occurrences of $B$ and $c$ occurrences of $C$ to the minimal elements of $\SET(\alpha/\beta)$ when $c_1\leq c_2$.
        \item If $c_1>c_2$, we fill the numbers $\{1,\ldots, n\}$ in order in $\Psi(w)$ according to the letters of $w$ as follows. For each occurrence of $B$, we write the next number in the body, whereas we fill a column of the claw starting at the bottom for each occurrence of $C$. For the last occurrence of $C$, we also fill the remaining cells of the upper claw in order.

        Again, we use \Cref{lem:key-min} to show that these tableaux are exactly the minimal elements of $\SET(\alpha/\beta)$. Let us consider the pairs of cells with labels $(i, i+1)$ in a tableau generated using the above algorithm. Again, they are positioned either such that:
        \begin{itemize}
            \item $i$ and $i+1$ are consecutive elements in a row, if they both appear in the body or if they both appear near the end of the upper claw.
            \item $i$ and $i+1$ are consecutive elements in a column, if they appear in the claws.
            \item $i$ is at the top of a column. Following the algorithm, either $i$ is in the upper claw, and if $i+1$ is not to its right, it must be lower, or $i$ is in the body, and if $i+1$ is not to its right, it is in the lower claw. Therefore, if $i+1$ is neither in the same row nor in the same column of $i$, it must be lower.
        \end{itemize}
        In all these cases, the conditions of \Cref{lem:key-min} are satisfied for $T$ to be a minimal element.

        Next, we need to show that these are all the minimal elements. From  \Cref{lem:columns_min_elements_in_lobsters}, we know that any minimal element has columns filled with intervals on $\mathbb{N}$, meaning that the columns of the claws have consecutive integers. Therefore, the only tableaux that could be minimal elements and are not given by the algorithm above are those whose last entry of the lower claw is not followed by  $n-b-2c+1$ consecutive  entries in the upper claw. In that case, let $j$ be the rightmost entry in the lower claw, and let $j+k$ be the maximal value such that $j+k+1$ is in the upper claw and $j+k$ is not. Necessarily, $j+k$ is lower than $j+k+1$ and not in the same column. From \Cref{lem:key-min}, such a tableau is not a minimal element. Hence, the minimal elements of $\SET(\alpha/\beta)$ are in bijection with the binary words on the alphabet $\{B,C\}$ with $b$ occurrences of $B$ and $c$ occurrences of $C$ through the bijection $\Psi$ when $c_1>c_2$.
    \end{enumerate}
Finally, these arguments also establish the claim that $S^{\col}_{\alpha/\beta}$ is always a minimal element.
\end{proof}

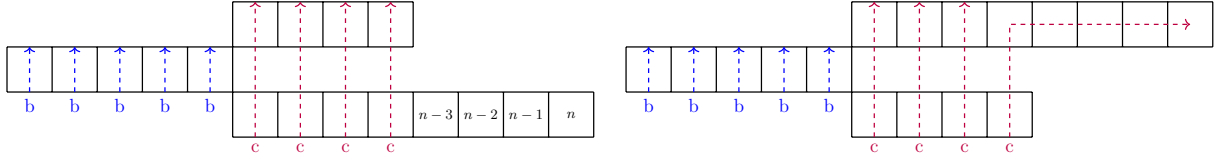
\begin{figure}
    \centering
    \scalebox{0.6}{\begin{tikzpicture}
        \draw[thick] (0,0) grid (8,1);
        \draw[thick] (-5,1) grid (0,2);
        \draw[thick] (0,2) grid (4,3);
        \foreach \x in {0,1,...,4}
          {\draw[blue, dashed, thick, ->] (\x-4.5, 1) node[below] {b} -- (\x-4.5, 2);}
        \foreach \x in {0,1,2,3}
        {\draw[purple, dashed, thick, ->] (\x+0.5, 0) node[below] {c} -- (\x+0.5, 3);}
        \draw (4.5,0.5) node {\scriptsize $n-3$};
        \draw (5.5,0.5) node {\scriptsize $n-2$};
        \draw (6.5,0.5) node {\scriptsize $n-1$};
        \draw (7.5,0.5) node {\scriptsize $n$};
    \end{tikzpicture}}\quad
    \scalebox{0.6}{\begin{tikzpicture}
        \draw[thick] (0,0) grid (4,1);
        \draw[thick] (-5,1) grid (0,2);
        \draw[thick] (0,2) grid (8,3);
        \foreach \x in {0,1,...,4}
          {\draw[blue, dashed, thick, ->] (\x-4.5, 1) node[below] {b} -- (\x-4.5, 2);}
        \foreach \x in {0,1,2}
        {\draw[purple, dashed, thick, ->] (\x+0.5, 0) node[below] {c} -- (\x+0.5, 3);}
        \draw[purple, dashed, thick, ->] (3.5,0) node[below] {c} -- (3.5, 2.5) -- (7.5, 2.5);
    \end{tikzpicture} }
    \caption{The filling algorithms to generate minimal elements in lobsters, an illustration of the bijection $\Psi$.}
    \label{fig:lobster-filling-algorithm}
\end{figure}

The proof of \Cref{thm:lobster-min-elts} defines an explicit bijection $\Psi$ from binary words on the alphabet $\{B,C\}$ with $b$ occurrences of $B$ and $c$ occurrences of $C$ to minimal elements of $\SET(\mathcal{L}^{c_1,c_2}_b)$, where $c=\min(c_1,c_2)$. In this paper, we sometimes refer to the $BC$-word of a minimal element $T$ of $\SET(\alpha/\beta)$ for a lobster $\alpha/\beta$. This indeed refers to the unique binary word $\Psi^{-1}(T)$.

\begin{example}
    Consider the lobster $\mathcal{L}^{3,2}_2$. Here, $c=\min(3,2) = 2$, so there are $6 = \binom{2+2}{2}$ minimal elements, corresponding to the six binary words on the alphabet $\{B,C\}$.
    
    \begin{center}
        \begin{tabular}{c|c|c|c|c|c}
            \young(::467,12,::35) &  \young(::367,14,::25) & \young(::356,17,::24) & \young(::267,34,::15) &  \young(::256,37,::14) & \young(::245,67,::13) \\
            \hline
             BBCC & BCBC & BCCB & CBBC & CBCB & CCBB 
        \end{tabular}
    \end{center}
     Similarly, for the lobster $\mathcal{L}^{2,3}_2$, there are also $6$ minimal elements:
     \begin{center}
        \begin{tabular}{c|c|c|c|c|c}
            \young(::46,12,::357) &  \young(::36,14,::257) & \young(::35,16,::247) & \young(::26,34,::157) &  \young(::25,36,::147) & \young(::24,56,::137) \\
            \hline
             BBCC & BCBC & BCCB & CBBC & CBCB & CCBB 
        \end{tabular}
    \end{center}
\end{example}

We now focus on counting the number of inversions in lobster standard extended tableaux, allowing us to give the rank of the $\SET(\alpha/\beta)$ poset. We start with a specific case, which we then use to find the inversion number of all the minimal elements.
\begin{lemma} \label{lem:inv_S}Let $\alpha/\beta$ be a right lobster with $c_1$, $b$, $c_2$ cells in the top, middle, and bottom row respectively.  Then 
$$\operatorname{inv}(S^\col_{\alpha/\beta})
=c_1(b+\operatorname{min}\{c_1, c_2\})+\binom{b}{2}+\binom{\operatorname{max}\{c_1,c_2\}}{2}.$$
    
\end{lemma}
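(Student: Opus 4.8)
The plan is to evaluate $\inv(S^\col_{\alpha/\beta})$ directly from the left-shadow identity recorded at the end of \Cref{sec:key-lemmas}, namely that $\inv(S^\col_{\alpha/\beta}) = \sum_{c \in \alpha/\beta} \ls(c)$ for any skew shape. Thus I only need to sum the left shadows over the cells of the lobster, and no explicit description of the filling $S^\col_{\alpha/\beta}$ is required.

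First I would fix coordinates. In the right lobster $\mathcal{L}_b^{c_1,c_2}$ the body occupies the cells in row $2$ and columns $2,\dots,b+1$, while both claws begin in column $b+2$: the lower claw occupies row $1$, columns $b+2,\dots,b+1+c_2$, and the upper claw occupies row $3$, columns $b+2,\dots,b+1+c_1$. With these coordinates the left shadows are read off directly. A body cell in column $j+1$ (for $1\le j\le b$) has only the $j-1$ body cells to its left weakly southwest of it, so its left shadow is $j-1$; summing over the body gives $\binom{b}{2}$. A lower claw cell, lying in the bottom row, sees only the lower claw cells to its left, so the $j$-th such cell has left shadow $j-1$, and the lower claw contributes $\binom{c_2}{2}$. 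The $j$-th upper claw cell (row $3$, column $b+1+j$) sees the $j-1$ earlier upper claw cells, all $b$ body cells (which lie weakly left and strictly below it), and exactly those lower claw cells in columns $\le b+1+j$, of which there are $\min(j,c_2)$; hence its left shadow is $(j-1)+b+\min(j,c_2)$, and the upper claw contributes
\[
\sum_{j=1}^{c_1}\bigl[(j-1)+b+\min(j,c_2)\bigr]=\binom{c_1}{2}+bc_1+\sum_{j=1}^{c_1}\min(j,c_2).
\]

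The only point requiring care --- and the one I expect to be the main, if modest, obstacle --- is the evaluation of $\sum_{j=1}^{c_1}\min(j,c_2)$, which is precisely what forces the $\min\{c_1,c_2\}$ versus $\max\{c_1,c_2\}$ split in the statement. If $c_1\le c_2$ every summand equals $j$, giving $\binom{c_1+1}{2}$, and since $\binom{c_1}{2}+\binom{c_1+1}{2}=c_1^2$ the upper-claw total becomes $bc_1+c_1^2=c_1(b+c_1)$; adding $\binom{b}{2}+\binom{c_2}{2}$ yields the claimed formula with $\min=c_1$ and $\max=c_2$. If $c_1> c_2$ the sum splits as $\binom{c_2+1}{2}+(c_1-c_2)c_2$, and combining with $\binom{c_1}{2}+\binom{c_2}{2}$ (using $\binom{c_2}{2}+\binom{c_2+1}{2}=c_2^2$) collapses to $c_1(b+c_2)+\binom{c_1}{2}$, matching the statement with the roles of $\min$ and $\max$ exchanged. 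Each case is a one-line algebraic simplification, so the entire argument reduces to a careful enumeration of left shadows followed by this single elementary case analysis.
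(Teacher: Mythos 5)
Your proof is correct and takes essentially the same route as the paper: both arguments sum the per-cell inversion contributions of $S^\col_{\alpha/\beta}$ over the lobster (the paper writes out the explicit filling and reads off these counts, which are exactly the left shadows you invoke via the observation at the end of \Cref{sec:key-lemmas}), and both reduce to the same case split on $c_1$ versus $c_2$. Your arithmetic in each case checks out against the paper's.
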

\begin{proof}
Let $c_1>c_2$. Then

\cellsize=5ex

\[S^\col_{\alpha/\beta}=\tableau{
        & & & & {\scr  b+2}&  {\scr b+4}&\cdots& {\scr b+2c_2}&    \overset{\scr b+2c_2}{\scr +1} &\dots& \overset{\scr b+c_1}{ \scr +c_2}\\
       {\scr 1}  &  {\scr 2}  &\cdots& {\scr b}\\
        &  &   &   & {\scr b+1}  &{\scr b+3}   &\dots&  \overset{ \scr   b+2c_2} {\scr-1}
   }.\]

The number of inversions for each cell is 
\[\phantom{S^\col_{\alpha/\beta}=} \tableau{
        & & & & {\scr  b+1}&  {\scr b+3}&\cdots&  \overset{\scr b+2c_2}{\scr -1}&    {\scr b+2c_2} &\dots& \overset{\scr b+c_1}{ \scr +c_2\!-\!1}\\
       {\scr 0}  &  {\scr 1}  &\cdots& {\scr b-1}\\
        &  &   &   & {\scr 0}  &{\scr 1}   &\dots&   {\scr c_2-1}
   }\]

   and hence \begin{equation*}
       \begin{split}
   \inv(S^{\col}_{\alpha/\beta}) &=\binom{c_2}{2} +\binom{b}{2}+ bc_1 + (1+3+\cdots+(2c_2-1)) + c_2(c_1-c_2) +(c_2+\cdots + (c_1-1)) \\
     &=\binom{c_2}{2} +\binom{b}{2}+ bc_1 +c_2 c_1 + \binom{c_1}{2} -\binom{c_2}{2}\\
     &=c_1(b+c_2) + \binom{b}{2} + \binom{c_1}{2}.
   \end{split}
   \end{equation*}

The analysis for the case $c_1\le c_2$ is similar but more straightforward, so we omit it. The end result is that 
$$\inv(S^\col_{\alpha/\beta}) = 
\begin{cases}
    c_1(b+c_1)+\binom{b}{2}+\binom{c_2}{2}&\text{ if }c_1\le c_2\\
    c_1(b+c_2)+\binom{b}{2}+\binom{c_1}{2}&\text{ if }c_1> c_2
\end{cases},
$$
which reduces to the formula in the statement.
\end{proof}

We are now ready to compute the inversion numbers of all the minimal elements of $\SET(\alpha/\beta).$
\begin{theorem}\label{thm:min_elts}
    Let $\alpha/\beta$ be the right lobster $\mathcal{L}_b^{c_1, c_2}$, with $c_1$, $b$, $c_2$ cells in the top, middle, and bottom row respectively.  Then
    \begin{itemize}
        \item If $c_1\le c_2$, then all minimal elements $T$ of $\SET{(\alpha/\beta)}$ have $\operatorname{inv}(T)=c_1(b+c_1)+\binom{b}{2}+\binom{c_2}{2}.$
        \item If $c_1>c_2$, then a minimal element $T$ has $\operatorname{inv}(T)=c_1(b+c_2)+\binom{b}{2}+\binom{c_1}{2}-(r-1)(c_1-c_2)$, where $r-1$ is the number of $B$'s after the rightmost $C$ in $\Psi^{-1}(T)$, the $BC$-word of $T$, for $1\le r \le b+1$. In particular, there are $\binom{b+c_2-r}{c_2-1}$ minimal elements $T$  with $\operatorname{inv}(T)=c_1(b+c_2)+\binom{b}{2}+\binom{c_1}{2}-(r-1)(c_1-c_2)$ for $1\le r \le b+1$. 
        
       Moreover, in this case, $S^{\col}_{\alpha/\beta}$ is the unique minimal element with the smallest  number of inversions,  among all the minimal elements.     
    \end{itemize}
\end{theorem}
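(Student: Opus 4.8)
The plan is to deduce the ``Moreover'' clause directly from the inversion formula obtained in the first part of \Cref{thm:min_elts}. When $c_1 > c_2$, every minimal element $T$ of $\SET(\alpha/\beta)$ satisfies
\[
\inv(T) = c_1(b+c_2) + \binom{b}{2} + \binom{c_1}{2} - (r-1)(c_1-c_2), \qquad 1 \le r \le b+1,
\]
where $r-1$ is the number of $B$'s lying to the right of the rightmost $C$ in $\Psi^{-1}(T)$. Since $c_1 - c_2 > 0$, the right-hand side is a strictly decreasing function of $r$; in particular distinct values of $r$ yield distinct inversion numbers. Hence the smallest inversion number over all minimal elements is attained exactly at the largest admissible value $r = b+1$, and this strict monotonicity is what will supply the uniqueness asserted in the statement.

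First I would determine which minimal elements realize $r = b+1$. The equality $r = b+1$ says that all $b$ copies of $B$ occur after the rightmost $C$, which forces the $BC$-word to be $C^{c}B^{b}$; this is the only binary word with $b$ occurrences of $B$ and $c$ occurrences of $C$ having that property. Thus a single minimal element attains the minimal inversion number, and it remains only to identify this element with $S^{\col}_{\alpha/\beta}$. I would establish the identification by applying the filling algorithm of \Cref{thm:lobster-min-elts} (in the branch $c_1 > c_2$) to the word $C^{c}B^{b}$ and matching the resulting tableau, entry by entry, against the explicit column-superstandard filling recorded in \Cref{lem:inv_S}.

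The step I expect to be the main obstacle is precisely this identification, namely confirming that $\Psi(C^{c}B^{b}) = S^{\col}_{\alpha/\beta}$; the bookkeeping of how the algorithm threads the consecutive labels through the claw and body columns, together with the verification that this agrees with reading all columns bottom-to-top and left-to-right, is the delicate part. Once the identification is in place, the strict monotonicity in $r$ settles both the minimality of $\inv(S^{\col}_{\alpha/\beta})$ and its uniqueness among the minimal elements in a single stroke. I would close by emphasising that the clause is genuinely special to the regime $c_1 > c_2$: by the first bullet of \Cref{thm:min_elts}, when $c_1 \le c_2$ all minimal elements share a common inversion number, so no minimal element is singled out by such an extremal property.
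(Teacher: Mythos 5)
Your opening reductions are sound and in fact coincide with the paper's route: since $c_1-c_2>0$ the inversion number $c_1(b+c_2)+\binom{b}{2}+\binom{c_1}{2}-(r-1)(c_1-c_2)$ is strictly decreasing in $r$, the value $r=b+1$ forces every $B$ to follow the rightmost $C$, and $C^{c_2}B^{b}$ is the only such word, so the minimal element of smallest inversion number is unique. The fatal step is exactly the one you flag as the ``main obstacle'': the identification $\Psi(C^{c_2}B^{b})=S^{\col}_{\alpha/\beta}$ is false, and no bookkeeping can rescue it. In the branch $c_1>c_2$ of the filling algorithm of \Cref{thm:lobster-min-elts}, the word $C^{c_2}B^{b}$ fills all claw columns first (each $C$ fills a claw column bottom-to-top, the last $C$ also completing the upper claw) and only then the body, so $\Psi(C^{c_2}B^{b})$ has bottom claw $1,3,\ldots,2c_2-1$, top claw $2,4,\ldots,2c_2,2c_2+1,\ldots,c_1+c_2$, and body $c_1+c_2+1,\ldots,c_1+c_2+b$. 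By contrast, $S^{\col}_{\alpha/\beta}$ fills the leftmost (body) columns first, so its body is $1,\ldots,b$; its $BC$-word is $B^{b}C^{c_2}$, i.e.\ $r=1$, and by the very formula you invoke it attains the \emph{largest} inversion number among the minimal elements --- and in general not even uniquely, since there are $\binom{b+c_2-1}{c_2-1}$ minimal elements with $r=1$ (this exceeds $1$ whenever $c_2\ge 2$). A tiny check: for $b=1$, $c_1=2$, $c_2=1$ one has $\inv\bigl(S^{\col}_{\alpha/\beta}\bigr)=5$ while $\inv\bigl(\Psi(CB)\bigr)=4$.

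This is also precisely what the paper's own proof establishes: it states that the largest inversion number is that of $S^{\col}_{\alpha/\beta}$, whose $BC$-word is $B\cdots BC\cdots C$, and it displays explicitly the least-rank tableau $\Psi(C^{c_2}B^{b})$, whose body is filled with the largest entries $c_1+c_2+1,\ldots,c_1+c_2+b$. In other words, the printed ``Moreover'' clause is inconsistent with the paper's proof (and with the count $\binom{b+c_2-r}{c_2-1}$ in the same theorem): per the proof, the unique minimal element of smallest inversion number is $\Psi(C^{c_2}B^{b})$, not $S^{\col}_{\alpha/\beta}$, which instead sits at the opposite extreme. So your proposal, by trying to force the minimizer to be $S^{\col}_{\alpha/\beta}$, would fail exactly at the identification step; had you carried out the entry-by-entry matching you deferred, you would have found a mismatch rather than a proof. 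Separately, note that your proposal addresses only the ``Moreover'' clause, taking the two displayed inversion formulas and the enumeration $\binom{b+c_2-r}{c_2-1}$ as given; even with the identification issue resolved it would not constitute a proof of the full statement, whose bulk the paper proves by analysing how adjacent $BC$-swaps in the word change the inversion number.
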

\begin{proof}
    \textbf{Case (1):} Let $c_1\le c_2$. We claim that for minimal elements $T, T'\in \SET(\alpha/\beta)$ such that their corresponding words differ by only a single swap of an adjacent $B$ and $C$ (for example, $B{\color{red}{BC}}BC$ and $B{\color{red}{CB}}BC$), we have $\operatorname{inv}(T)=\operatorname{inv}(T').$  Without loss of generality, assume that $T$ has the substring ``$BC$" and in $T'$ the corresponding substring is ``$CB$".  Then $T$ and $T'$ differ in exactly three cells, which contain consecutive integers:
    \begin{center}
    \ytableausetup{nosmalltableaux, boxsize=2.5em}%
    \begin{ytableau}
         \none&\none&\none&\cdots&\scalebox{0.8}[0.8]{$r+2$}&\cdots\\
        \cdots&r&\cdots\\
        \none&\none&\none&\cdots&\scalebox{0.8}[0.8]{$r+1$}&\cdots\\
    \end{ytableau}\qquad\qquad
    \begin{ytableau}
        \none&\none&\none&\cdots&\scalebox{0.8}[0.8]{$r+1$}&\cdots\\
        \cdots&\scalebox{0.8}[0.8]{$r+2$}&\cdots\\
        \none&\none&\none&\cdots&r&\cdots\\
    \end{ytableau}
    $$T\hskip 2in T'$$

    \end{center}

    Note that $\rw(T)$ and $\rw(T')$ differ only  in the positions of $r$, $r+1$ and $r+2$:
    $$\rw(T) = \cdots r+2\cdots r\cdots r+1\cdots$$
    $$\rw(T') = \cdots r+1\cdots r+2\cdots r\cdots$$
    
    and so for any integer $j\ne r, r+1, r+2$, $j$ is in the same position in both $\rw(T)$ and $\rw(T')$. 
    
    Because $j<r$ or $j>r+2$, the contributions of $j$ to $\operatorname{inv}(T)$ and to $\operatorname{inv}(T')$ are equal.

    It remains to find the difference in the contributions of $r$, $r+1$, and $r+2$ to $\operatorname{inv}(T)$ and $\operatorname{inv}(T')$.  This is equivalent to computing $|\operatorname{inv}(r+2,r,r+1)-\operatorname{inv}(r+1,r+2,r)|$.  But
    $$|\operatorname{inv}(r+2,r,r+1)-\operatorname{inv}(r+1,r+2,r)|=|2-2|=0.$$
    Thus $\operatorname{inv}(T)=\operatorname{inv}(T')$ and swapping an adjacent $B$ and $C$ does not change the inversion number of the corresponding minimal element.
    Since all $BC$-words can be obtained by performing adjacent $BC$-swaps on the $BC$-word for $S^\col_{\alpha/\beta}$, namely the word consisting of $b$ $B$'s, followed by $c_1$ $C$'s, $BB\dots BCC\dots C$, we can now conclude that all minimal elements $T$ of $\SET(\alpha/\beta)$ have 
    $$\operatorname{inv}(T)=\operatorname{inv}(S^\col_{\alpha/\beta})=c_1(b+c_1)+\binom{b}{2}+\binom{c_2}{2},$$
    by Lemma \ref{lem:inv_S}.

    \textbf{Case (2):} Let $c_1>c_2$. This case is exactly the same as \textbf{Case (1)} except when the $BC$-swap involves the rightmost $C$.  Therefore we only need to examine this latter case.
    
    So assume that $T'$ is obtained from $T$ by swapping the rightmost $BC$-pair in the $BC$-word of $T$.  Similar to the proof of \textbf{Case (1)}, we can look at the difference between $T$ and $T'$:

    \ytableausetup{mathmode, boxframe=normal, boxsize=2.5em}
 
   $$\raisebox{-18pt} {$T=$}\quad
   \begin{ytableau}
    \none&\none&\none&\cdots&\scalebox{0.8}[0.8]{$r+2$}&\scalebox{0.8}[0.8]{$r+3$}&\scalebox{0.8}[0.8]{$r+4$}&\cdots&\scalebox{0.9}[0.9]{$\overset{\scr r+c_1}{\scr -c_2+2}$}\\
       \cdots& r &\cdots\\
        \none&\none&\none&\cdots&\scalebox{0.8}[0.8]{$r+1$}
   \end{ytableau}$$
   and 
   $$\raisebox{-18pt} {$T'=$}\quad \begin{ytableau}
       \none&\none&\none&\cdots& \scalebox{0.8}[0.8]{$r+1$}&\scalebox{0.8}[0.8]{$r+2$}&\scr \scalebox{0.8}[0.8]{$r+3$}&\cdots&\scalebox{0.9}[0.9]{$\overset{\scr r+c_1}{\scr -c_2+1}$}\\
       \cdots&\scalebox{0.9}[0.9]{$\overset{\scr r+c_1}{\scr -c_2+2}$}&\cdots\\
        \none&\none&\none&\cdots& r
   \end{ytableau}\, .$$

By the same argument as in \textbf{Case (1)},
      \begin{align*}&\, \operatorname{inv}(T)-\operatorname{inv}(T')\\
   &= \operatorname{inv}({r+c_1-c_2+2},\cdots, r+2,r,r+1)-
    \operatorname{inv}(r+c_1-c_2+1,\cdots, r+1,r+c_1-c_2+2,r).\end{align*}
    
Counting inversions, we have:
$$\operatorname{inv}({r+c_1-c_2+2},\cdots, r+2,r,r+1)=(c_1-c_2+2)+\dots+2+0+0$$
and 
$$\operatorname{inv}(r+c_1-c_2+1,\cdots, r+1,r+c_1-c_2+2,r)=(c_1-c_2+1)+\dots+2+1+1+0.$$
Thus
$$\operatorname{inv}(T)-\operatorname{inv}(T')=(c_1+c_2+2)-2=c_1-c_2,$$
and so swapping the rightmost $BC$-pair in the  $BC$-word of $T$ decreases the inversion number by $c_1-c_2.$

There are $b$ $B$'s that can swap with the rightmost $C$ and so there are $b+1$ possible inversion numbers.  The largest inversion number is that of $S^\col_{\alpha/\beta}$, whose corresponding $BC$-word is $BB\dots B CC\dots C$, so there is no occurrence of $B$ after the rightmost $C$.
The remaining inversion numbers, obtained by swapping $r-1$ occurrences of $B$ with the rightmost $C$, for $1\le r \le b+1,$ are $$\operatorname{inv}(S^\col_{\alpha/\beta})-(r-1)(c_1-c_2)=
c_1(b+c_2)+\binom{b}{2}+\binom{c_1}{2}-(r-1)(c_1-c_2),$$ 
by Lemma \ref{lem:inv_S}. 

The lowest rank (least number of inversions) corresponds to the $BC$-word $CC\dots CBB\dots B$, with $b$ $B$'s after the rightmost $C$, and tableau
\begin{center}
    \ytableausetup
{mathmode, boxframe=normal, boxsize=3em}
\begin{ytableau}
       \none&\none&\none&2&4&\cdots&\scalebox{0.9}[0.9]{$2c_2$}&\cdots&\scalebox{0.8}[0.8]{$c_1+c_2$}\\
       \scalebox{0.6}[0.6]{$c_1+c_2+1$}&\cdots&\scalebox{0.6}[0.6]{$c_1+c_2+b$}\\
        \none&\none&\none&1&3&\cdots&\scalebox{0.8}[0.8]{$2c_2-1$}
   \end{ytableau}\,.
\end{center}
In that case, the rank is \[c_1(b+c_2)+\binom{b}{2}+\binom{c_1}{2}-b(c_1-c_2) = c_2(b+c_1)+\binom{b}{2}+\binom{c_1}{2}.\]

Finally, the number of elements of rank $c_1(b+c_2)+\binom{b}{2}+\binom{c_1}{2}-(r-1)(c_1-c_2)$ equals the number of $BC$-words in which the rightmost $C$ is followed by $r-1$ $B$'s.  Such a word begins with a string of $(c_2-1)$ $C$'s and $(b+1-r)$ $B$'s   in any order, followed by the last $C$ and the remaining $B$'s.  Thus there are 
$\binom{b+c_2-r}{c_2-1}$
such words, as desired.
\end{proof}
\ytableausetup {mathmode, boxframe=normal, boxsize=normal}
\begin{remark} The total number of minimal elements when $c_1>c_2$ equals
$$\sum_{r=1}^{b+1}\binom{b+c_2-r}{c_2-1}= \sum_{m=k}^{b+k}\binom{m}{k}=\binom{b+k+1}{k+1}=\binom{b+c_2}{c_2}=\binom{b+c_2}{b},$$
where we used the substitutions $m=b+c_2-r$ and $k=c_2-1$, as well as the hockey-stick identity.
This agrees with \Cref{thm:lobster-min-elts}.
\end{remark}

We now give the rank of the poset $\SET(\alpha/\beta)$,  i.e., the length of the longest chain in $\SET(\alpha/\beta)$.  Denote this by  $\operatorname{rank}(\SET(\alpha/\beta))$.
\begin{corollary}
    Let $\alpha/\beta$ be the right lobster $\mathcal{L}_b^{c_1,c_2}$ with $c_1$, $b$ and $c_2$ cells in the top, middle and bottom row respectively. Then  
    $$
   \operatorname{rank}(\SET(\alpha/\beta))=
    \begin{cases}
      c_2(b+c_1)-\binom{c_1+1}{2} &\text{ if } c_1\le c_2\\
      bc_1+\binom{c_2}{2} &\text{ if } c_1> c_2\\
    \end{cases}.
    $$
\end{corollary}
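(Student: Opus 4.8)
The plan is to exploit the fact that, although $\SET(\alpha/\beta)$ is not graded, it is bounded above by the unique maximal element $S^{\row}_{\alpha/\beta}$, and that by \Cref{prop:skew-SET-module-top-element}(1) every maximal chain from a fixed element $T$ up to $S^{\row}_{\alpha/\beta}$ has length exactly $\inv(S^{\row}_{\alpha/\beta})-\inv(T)$. In a finite poset the longest chain is a maximal chain, and every maximal chain begins at a minimal element; hence
\[
\operatorname{rank}(\SET(\alpha/\beta))=\inv(S^{\row}_{\alpha/\beta})-\min_{T}\inv(T),
\]
where the minimum runs over all minimal elements $T$. Thus the entire computation reduces to two inputs: the inversion number of the top element, and the least inversion number among the minimal elements.

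For the first input I would recall that $\inv(S^{\row}_{\alpha/\beta})=\binom{|\alpha/\beta|}{2}=\binom{b+c_1+c_2}{2}$. For the second, I would read the answer directly off \Cref{thm:min_elts}: when $c_1\le c_2$ all minimal elements share the inversion number $c_1(b+c_1)+\binom{b}{2}+\binom{c_2}{2}$, so this is the minimum; when $c_1>c_2$ the minimum is attained by the minimal element whose $BC$-word is $CC\cdots CBB\cdots B$, namely $c_2(b+c_1)+\binom{b}{2}+\binom{c_1}{2}$ (the least value recorded in the proof of \Cref{thm:min_elts}).

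The remaining step is a routine algebraic simplification, made painless by the trinomial identity
\[
\binom{b+c_1+c_2}{2}=\binom{b}{2}+\binom{c_1}{2}+\binom{c_2}{2}+bc_1+bc_2+c_1c_2 .
\]
Substituting this into the displayed formula for the rank, the term $\binom{b}{2}$ and one of $\binom{c_1}{2},\binom{c_2}{2}$ cancel against the minimal-element inversion count in each case. When $c_1>c_2$ the surviving terms are $\binom{c_2}{2}+bc_1$, as $bc_2$ and $c_1c_2$ cancel exactly. When $c_1\le c_2$ the surviving terms collect to $bc_2+c_1c_2+\bigl(\binom{c_1}{2}-c_1^{2}\bigr)=c_2(b+c_1)-\binom{c_1+1}{2}$, using $\binom{c_1}{2}-c_1^{2}=-\binom{c_1+1}{2}$.

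I do not anticipate a genuine obstacle: all the conceptual work—identifying the minimal elements, computing their inversion numbers, and knowing that chains to the top are governed by $\inv$—has already been done in \Cref{prop:skew-SET-module-top-element} and \Cref{thm:min_elts}. The only point needing a moment's care is the reduction in the first paragraph, that the longest chain terminates at the globally \emph{smallest} inversion value rather than at an arbitrary minimal element; this is immediate once one observes that maximizing $\inv(S^{\row}_{\alpha/\beta})-\inv(T)$ over minimal $T$ is the same as minimizing $\inv(T)$, and that any chain extends to a maximal chain starting at a minimal element.
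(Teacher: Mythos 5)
Your proposal is correct and follows essentially the same route as the paper: both compute $\operatorname{rank}(\SET(\alpha/\beta))$ as $\inv(S^{\row}_{\alpha/\beta})-\inv(T_0)$, where $T_0$ is the minimal element of smallest inversion number taken from \Cref{thm:min_elts}, expand $\binom{b+c_1+c_2}{2}$ via the trinomial identity, and simplify. The only difference is that you explicitly justify why the longest chain must start at the minimal element of globally least inversion number (via \Cref{prop:skew-SET-module-top-element}(1)), a step the paper's proof takes for granted; this is a welcome clarification rather than a divergence.
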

\begin{proof}
First, $\operatorname{rank}(\SET(\alpha/\beta))=\operatorname{inv}(S^\row_{\alpha/\beta})-\operatorname{inv}(\text{lowest minimal element}).$
Note that 
$$\operatorname{inv}(S^\row_{\alpha/\beta})
=\binom{b+c_1+c_2}{2}=\binom{b}{2}+\binom{c_1}{2}+\binom{c_2}{2}+bc_1+bc_2+c_1c_2.$$
By Theorem \ref{thm:min_elts},
    $$
    \operatorname{inv}(\text{lowest minimal element})=
    \begin{cases}
        c_1(b+c_1)+\binom{b}{2}+\binom{c_2}{2}&\text{ if } c_1\le c_2\\
        c_2(b+c_1)+\binom{b}{2}+\binom{c_1}{2}&\text{ if } c_1> c_2
    \end{cases}.$$

A calculation now establishes the formulas in the statement, finishing the proof.   
\end{proof}

\noindent
\textbf{Acknowledgments.} This project was initiated in June 2025 at  the   \textit{Collaborative Workshop in Algebraic Combinatorics}, held at the Institute for Advanced Study, Princeton.  The authors gratefully acknowledge the  support of the IAS, as well as partial support from the Natural Sciences and Engineering Research Council of Canada, and NSF grants DMS-
2153998 and DMS-2452044. 

The authors are also very grateful to James Sundstrom for generously offering his invaluable computational expertise. His elegant, powerful and versatile Python code greatly facilitated the process of exploration and discovery in this project.

\bibliographystyle{plain}
\bibliography{PAPER-FINAL-IAS2025Sept6}

\end{document}